\documentclass{amsart}
\usepackage[all]{xy}
\usepackage{verbatim}
\usepackage{graphicx}
\usepackage{color}
\usepackage{amsthm}
\usepackage{amssymb}
\usepackage[colorlinks=true]{hyperref}
\usepackage{footmisc}

\usepackage{tikz}





\setcounter{equation}{0}

\numberwithin{equation}{section}

\newtheorem{theorem}[equation]{Theorem}
\newtheorem*{theorem*}{Theorem} \newtheorem{lemma}[equation]{Lemma}

\newtheorem*{conjecture*}{Mamma Conjecture}
\newtheorem*{conjecture1*}{Mamma Conjecture (revisited)}
\newtheorem{proposition}[equation]{Proposition}
\newtheorem{corollary}[equation]{Corollary}
\newtheorem*{corollary*}{Corollary}

\theoremstyle{remark}
\newtheorem{definition}[equation]{Definition}

\newtheorem{notation}[equation]{Notation}

\theoremstyle{remark}
\newtheorem{remark}[equation]{Remark}

\setcounter{tocdepth}{1}

\newcommand{\cD}{{\mathcal D}}

\newcommand{\cH}{{\mathcal H}}
\newcommand{\cI}{{\mathcal I}}

\newcommand{\cR}{{\mathcal R}}

\newcommand{\cT}{{\mathcal T}}


\newcommand{\bbA}{\mathbb{A}}

\newcommand{\bbC}{\mathbb{C}}

\newcommand{\bbL}{\mathbb{L}}
\newcommand{\bbN}{\mathbb{N}}
\newcommand{\bbP}{\mathbb{P}}
\newcommand{\bbR}{\mathbb{R}}

\newcommand{\bbQ}{\mathbb{Q}}
\newcommand{\bbZ}{\mathbb{Z}}










\newcommand{\too}{\longrightarrow}


\newcommand{\ie}{\textsl{i.e.}\ }
\newcommand{\eg}{\textsl{e.g.}}

\let\oldmarginpar\marginpar
\def\marginpar#1{\oldmarginpar{\tiny #1}}

\begin{document}

\title[Feynman quadrics-motive of the massive sunset graph]{Feynman quadrics-motive \\of the massive 
sunset graph}
\author{Matilde Marcolli and Gon{\c c}alo~Tabuada}

\address{Division of Physics, Mathematics, and Astronomy, California Institute of Technology, Pasadena, CA 91125, USA}
\email{matilde@caltech.edu}
\urladdr{http://www.its.caltech.edu/~matilde}
\address{Gon{\c c}alo Tabuada, Department of Mathematics, MIT, Cambridge, MA 02139, USA}
\email{tabuada@math.mit.edu}
\urladdr{http://math.mit.edu/~tabuada}
\thanks{Matilde Marcolli was supported by the NSF grant DMS-1707882 and Gon{\c c}alo Tabuada by the NSF CAREER Award \#1350472}

\subjclass[2010]{14C15, 14H40, 81Q30, 81T18}
\date{\today}

\keywords{Feynman integral, motive, period, quadric, Prym variety}
\abstract{We prove that the Feynman quadrics-motive of the massive sunset graph is ``generically'' not mixed-Tate. Moreover, we explicitly describe its ``extra'' complexity in terms of a Prym variety.}}



\maketitle
\vskip-\baselineskip
\vskip-\baselineskip


\section{Introduction}\label{sec:intro}
\subsection*{Feynman motive}
After the seminal work of Bloch-Esnault-Kreimer \cite{BEK}, there has been a lot of research concerned with the construction of motives associated to Feynman graphs $\Gamma$; consult
\cite{AluMa2, AluMa3, BloKr, BloKr2, BoBr, BrSch, BrSch2, BrSchY, CeyMar, Mar, Mar2, MarNi, Sch1, Sch2}. Several different approaches have been developed for the construction of
such ``Feynman motives". In all the cases the main problem is the construction of a motive
$M_\Gamma$ such that the (renormalized) Feynman integral of $\Gamma$ is
a period of $M_\Gamma$. There are different possible ways of writing the Feynman integral
(\eg\ Feynman parametric form, momentum space and configuration space) and each one of these ways leads to a different Feynman motive. The most commonly studied approach is the Feynman parametric form. In this case, given a base field $F$, the Feynman motive $M_\Gamma$ is defined as the Voevodsky's mixed motive $M({\mathbb P}^{n-1}\backslash X_\Gamma)_\bbQ \in \mathrm{DM}_{\mathrm{gm}}(F)_\bbQ$, where $n$ stands for the number of internal edges of 
$\Gamma$ and $X_\Gamma$ for the graph hypersurface defined by the vanishing of the Kirchhoff--Symanzik  
polynomial of $\Gamma$; consult \cite{BeBro, BEK, BrSch, Sch1} for details. When renormalization is taken into account, ${\mathbb P}^{n-1}\backslash X_\Gamma$
needs to be replaced by a certain blow-up of itself; see \cite{BEK, BloKr}. Currently, one of the most important open questions concerning Feynman graphs is the following:

\vspace{0.1cm}

{\em Question: Given a Feynman graph $\Gamma$, is the associated Feynman motive $M_\Gamma$ mixed-Tate? If not, how to describe its ``extra'' complexity?}  

\vspace{0.1cm} 

On the ``positive side'', $M_\Gamma$ is known to be mixed-Tate whenever $\Gamma$ has less than $14$ edges; see \cite{Sta,Ste}. Consult also \cite{AluMa4} for infinite families of mixed-Tate Feynman motives. On the ``negative side'', there exist examples of Feynman graphs $\Gamma$ with $14$ edges for which the Feynman motive $M_\Gamma$ is {\em not} mixed-Tate; see \cite{Doryn, Sch1}.

All the above can be generalized to the case where the Feynman graph $\Gamma$ is equipped with a mass parameter $m$. In this generality, the computation of the associated Feynman motive $M_{(\Gamma, m)}$ becomes much more difficult and only a few computations are currently known; consult \cite{AluMa,BloHov2, BloHov1, Vanhove}.

\subsection*{Feynman quadrics-motive}
As observed by Bloch-Esnault-Kreimer in \cite[\S5]{BEK} (see also \cite[\S1]{Mar}), the Feynman parametric form is not the only way of writing the Feynman integral of $\Gamma$. An alternative approach is to write the Feynman integral of $\Gamma$ in terms of edge propagators. This alternative approach is developed in \S\ref{sub:quadrics-motive}, where we also consider the case where $\Gamma$ is equipped with a mass parameter $m$. As explained in {\em loc. cit.}, given a spacetime dimension $D$,  this approach leads to the {\em Feynman quadrics-motive} $M^Q_{(\Gamma, m)}:=M(\bbP^{b_1(\Gamma) D}\backslash Q_{(\Gamma, m)})_\bbQ \in \mathrm{DM}_{\mathrm{gm}}(F)_\bbQ$, where $b_1(\Gamma)$ stands for the first Betti number of $\Gamma$ and $Q_{(\Gamma, m)}$ for the union $\bigcup_{i=1}^n Q_{i, \epsilon}$ of certain ``deformed'' quadric hypersurfaces; consult Definition \ref{FQmotdef}. Intuitively speaking, the Feynman motive and the Feynman quadrics-motive are two different ``motivic incarnations'' of the same period (= Feynman integral of $(\Gamma, m)$); see \S\ref{sub:Regularization}.

\subsection*{Statement of results}
Assume that the base field $F \subseteq \bbC$ is algebraically closed. 
Consider the {\em massive sunset graph}\footnote{$l_1$ and $l_2$ are called the associated ``loop variables''; consult \S\ref{sub:quadrics-motive} for details.} with mass parameter $m=(m_1, m_2, m_3) \in \bbQ^3$:
\begin{center}
\includegraphics[scale=0.25]{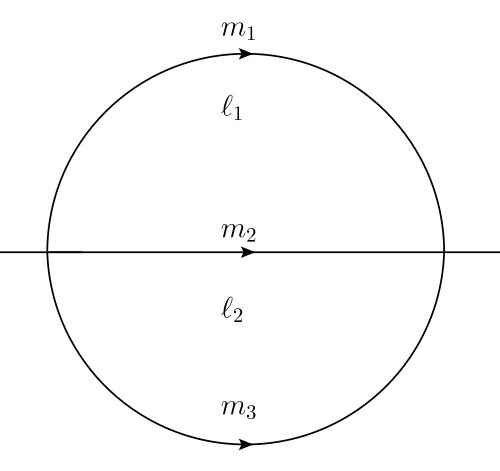}
\end{center}
In this case, the ``deformed'' quadric hypersurfaces $Q_{1, \epsilon}$, $Q_{2, \epsilon}$, and $Q_{3, \epsilon}$, corresponding to the $3$ internal edges, are odd-dimensional. Hence, following Beauville \cite[\S6.2]{Beau}, whenever $Q_{1, \epsilon} \cap Q_{2, \epsilon} \cap Q_{3, \epsilon}$ is a complete intersection, we have an associated (abelian) Prym variety $\mathrm{Prym}(\widetilde{C}/C)$, where $C$ stands for the discriminant divisor of the quadric fibration associated to the triple intersection and $\widetilde{C}$ for the \'etale double cover of the curve $C$.

Our main result answers the above important question, with the Feynman motive replaced by the Feynman quadrics-motive, in the case of the massive sunset graph:

\begin{theorem}\label{thm:main-new}
Let $(\Gamma,m)$ be the massive sunset graph. Assume that the spacetime dimension $D$ is $\geq 2$ and that the mass parameter $m=(m_1, m_2, m_3)$ satisfies $m_3^2 \neq m_1^2 + m_2^2$. Under these assumptions, the following holds:
\begin{itemize}
\item[(i)] The Feynman quadrics-motive $M^Q_{(\Gamma, m)}$ is {\em not} mixed-Tate.
\item[(ii)] The Feynman quadrics-motive $M^Q_{(\Gamma, m)}$ belongs to the smallest subcategory of $\mathrm{DM}_{\mathrm{gm}}(F)_\bbQ$ which can be obtained from the following set of motives 
$$\{M(\mathrm{Prym}(\widetilde{C}/C))_\bbQ, \bbQ(-1), \bbQ(1)\}$$ 
by taking direct sums, shifts, summands, tensor products, and at most $5$ cones.
\end{itemize}
\end{theorem}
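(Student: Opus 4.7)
The plan is to build $M^Q_{(\Gamma,m)}$ inside $\mathrm{DM}_{\mathrm{gm}}(F)_\bbQ$ out of the motives of the three quadrics $Q_i := Q_{i,\epsilon}$, their pairwise intersections $Q_i \cap Q_j$, and the triple intersection $X := Q_1 \cap Q_2 \cap Q_3$, by chaining one localization triangle for the complement $\bbP^{2D} \setminus Z$ (with $Z := Q_1 \cup Q_2 \cup Q_3$), three Mayer--Vietoris triangles to decompose $Z$, and one final Beauville-type triangle for $M(X)_\bbQ$. All ingredients other than $M(X)_\bbQ$ will turn out to be direct sums of Tate twists of $\bbQ$, while $M(X)_\bbQ$ will contribute one copy of $M(\mathrm{Prym}(\widetilde C/C))_\bbQ$; this gives (ii). Part (i) will then follow because the Prym variety is non-trivial under the standing hypotheses.

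\textbf{Localization, Mayer--Vietoris, and Tate ingredients (4 cones).} The standard localization distinguished triangle in Voevodsky motives presents $M^Q_{(\Gamma,m)} = M(\bbP^{2D}\setminus Z)_\bbQ$ as the cone of a canonical morphism between $M(Z)_\bbQ$ and $M(\bbP^{2D})_\bbQ$ (up to shift and Tate twist); this contributes $1$ cone. Iterated Mayer--Vietoris on $Z = (Q_1 \cup Q_2) \cup Q_3$ then produces three further distinguished triangles, decomposing successively $M(Q_1 \cup Q_2)_\bbQ$, $M(Z)_\bbQ$, and $M((Q_1 \cap Q_3) \cup (Q_2 \cap Q_3))_\bbQ$, in terms of the $M(Q_i)_\bbQ$, the $M(Q_i \cap Q_j)_\bbQ$, and $M(X)_\bbQ$; this contributes $3$ more cones. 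From the explicit defining equations each $Q_i$ is a projective cone, with $(D-1)$-plane vertex, over a quadric in $\bbP^D$, hence cellular, so $M(Q_i)_\bbQ$ is a direct sum of Tate twists. Stratifying each $Q_i \cap Q_j$ by $\{\epsilon\neq 0\}$ versus $\{\epsilon = 0\}$ exhibits it as the union of a product of two affine quadrics with a product of two projective null cones, each stratum cellular; this yields a cellular decomposition of $Q_i \cap Q_j$, so $M(Q_i \cap Q_j)_\bbQ$ is also a direct sum of Tate twists.

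\textbf{Triple intersection and the Prym variety (1 cone).} From the explicit defining equations
\[
Q_1 : l_1 \cdot l_1 + m_1^2 \epsilon^2 = 0,\ \ Q_2 : l_2 \cdot l_2 + m_2^2 \epsilon^2 = 0,\ \ Q_3 : (l_1+l_2) \cdot (l_1+l_2) + m_3^2 \epsilon^2 = 0,
\]
one computes that the net $\lambda \mapsto \lambda_1 Q_1 + \lambda_2 Q_2 + \lambda_3 Q_3$ has discriminant
\[
\det M_\lambda = (\lambda_1 m_1^2 + \lambda_2 m_2^2 + \lambda_3 m_3^2)(\lambda_1\lambda_2 + \lambda_1\lambda_3 + \lambda_2\lambda_3)^D
\]
on $\bbP^2$. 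The hypotheses $D \geq 2$ and $m_3^2 \neq m_1^2 + m_2^2$ ensure both that $X$ is a complete intersection of codimension $3$ in $\bbP^{2D}$ and that the discriminant curve $C$, together with its canonical double cover $\widetilde C \to C$ (parameterizing the two families of maximal isotropic subspaces in the singular quadric fibers), fits into the hypotheses of Beauville's Prym construction with $\widetilde C \to C$ étale and non-trivial. Invoking Beauville \cite[\S6.2]{Beau} together with its motivic upgrade via quadric-bundle decompositions then yields a distinguished triangle expressing $M(X)_\bbQ$ in terms of $M(\mathrm{Prym}(\widetilde C/C))_\bbQ$ (with a suitable Tate twist) and direct sums of Tate motives. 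This is the $5$th and final cone, and together with the preceding step establishes (ii).

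\textbf{Proof of (i) and main obstacle.} Under the standing hypotheses the Prym variety $\mathrm{Prym}(\widetilde C/C)$ is a non-trivial abelian variety, so $M(\mathrm{Prym}(\widetilde C/C))_\bbQ$ is not mixed-Tate (it carries non-trivial Hodge classes of odd weight, e.g.\ in $H^1$). Were $M^Q_{(\Gamma,m)}$ mixed-Tate, then since the mixed-Tate subcategory is stable under cones and direct summands, tracing through the above construction — every other constituent being mixed-Tate — would force $M(\mathrm{Prym}(\widetilde C/C))_\bbQ$ to be mixed-Tate too, a contradiction, proving (i). The main obstacle is the last step: the sunset net of quadrics is highly non-generic (its discriminant factors as a linear form times the $D$-th power of a conic form), so verifying from the single inequality $m_3^2 \neq m_1^2 + m_2^2$ that the classical Prym machinery applies, identifying the precise curve $C$ and étale double cover $\widetilde C$, and then upgrading Beauville's cohomological Prym--Fano correspondence to a clean distinguished triangle in $\mathrm{DM}_{\mathrm{gm}}(F)_\bbQ$ that separates the Prym summand from the Tate complement of $M(X)_\bbQ$, will require careful geometric analysis — most plausibly via a resolution or normalization of the discriminant before invoking the Prym construction.
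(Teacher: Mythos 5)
Your overall strategy matches the paper's: chain localization/Gysin triangles and Mayer--Vietoris triangles to reduce the question about $\bbP^{2D}\setminus Q_{(\Gamma,m)}$ to the triple intersection $Q_{123}:=Q_{1,\epsilon}\cap Q_{2,\epsilon}\cap Q_{3,\epsilon}$, then identify the non-Tate summand with the motive of Beauville's Prym variety. There are, however, two substantive issues with the way you execute this.

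\textbf{Compact supports.} You invoke ``the standard localization distinguished triangle'' to realize $M(\bbP^{2D}\setminus Z)_\bbQ$ as a cone on a morphism from $M(Z)_\bbQ$ to $M(\bbP^{2D})_\bbQ$ up to twist. That Gysin triangle for $M(-)$ requires $Z$ smooth, and $Z=Q_{1,\epsilon}\cup Q_{2,\epsilon}\cup Q_{3,\epsilon}$ is singular along its pairwise intersections; the same problem recurs in your Mayer--Vietoris steps involving $Q_i\cup Q_j$ and $(Q_1\cap Q_3)\cup(Q_2\cap Q_3)$. The paper's localization triangle \cite[Prop.~4.1.5]{Voev} and Mayer--Vietoris \cite[Ex.~16.18]{Maz} are both stated for $M^c(-)$, where no smoothness of the closed subvariety is needed, and where $M^c$ of a proper scheme agrees with $M$. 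Because $\bbP^{2D}\setminus Z$ is not proper, the paper therefore works with $M^c(\bbP^{2D}\setminus Q_{(\Gamma,m)})_\bbQ$ throughout, reduces the mixed-Tate question to $M^c(Q_{123})_\bbQ$ via the preparatory Lemma~\ref{lem:compact}, and only at the very end passes back to $M^Q_{(\Gamma,m)}=M(\bbP^{2D}\setminus Q_{(\Gamma,m)})_\bbQ$ by Voevodsky duality \cite[Thm.~4.3.7]{Voev}: $M(U)^\vee\simeq M^c(U)(-2D)[-4D]$ for $U$ smooth of dimension $2D$. Without that maneuver your cone chain is not well formed.

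\textbf{Cellularity and the ``Beauville-type triangle.''} You assert that each $Q_{i,\epsilon}$ and each $Q_{i,\epsilon}\cap Q_{j,\epsilon}$ is cellular by ``stratifying by $\{\epsilon\ne 0\}$ versus $\{\epsilon=0\}$.'' This doesn't parse: $\epsilon$ is the deformation parameter, not a coordinate on $\bbP^{2D}$; you presumably mean the homogeneous coordinate $x=u_0$, but even with that correction, a smooth complete intersection of two quadrics is not cellular in an obvious way, and your stratification argument is not a proof. The precise and short route is the one the paper takes: Proposition~\ref{prop:2} shows the $\epsilon$-deformations can be chosen so that $Q_{i,\epsilon}\cap Q_{j,\epsilon}$ and $Q_{123}$ are smooth complete intersections, and then \cite[Cor.~2.1]{BerTab} gives the explicit Chow-motive decomposition of a smooth complete intersection of two (resp.\ three) odd-dimensional quadrics: pure Tate in the pairwise case \eqref{eq:decomp-intersection}, and Tate plus $\mathfrak{h}^1(J^{D-2}_a(Q_{123}))_\bbQ\otimes\bbL^{\otimes(D-1)}$ in the triple case \eqref{eq:decomp-last}. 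This is also what makes your ``Beauville-type triangle'' unnecessary as a cone: the Prym does not enter via an extra distinguished triangle but as an actual direct summand of $\mathfrak h(Q_{123})_\bbQ$, after identifying $J^{D-2}_a(Q_{123})$ with $\mathrm{Prym}(\widetilde C/C)$ using \cite[Thm.~6.3]{Beau}. Your final ``main obstacle'' — upgrading the cohomological Prym--Jacobian identification to a motivic decomposition — is exactly the content of \cite{BerTab}, which you should cite rather than re-derive. With these two corrections (work with $M^c$ and dualize; use \cite{BerTab} in place of cellularity and the extra cone), your argument for part~(i) — non-triviality of $\mathrm{Prym}(\widetilde C/C)$ forces $H^1\neq 0$, and by \cite[Cor.~7.3]{Tot} a mixed-Tate proper smooth variety has vanishing odd cohomology, so $M^c(Q_{123})_\bbQ$ is not mixed-Tate, hence neither is $M^Q_{(\Gamma,m)}$ — coincides with the paper's.

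\end{document}
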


Roughly speaking, Theorem \ref{thm:main-new} shows that the Feynman quadrics-motive associated to the massive sunset graph is not mixed-Tate for a ``generic'' choice of the mass parameter $m$. Moreover, Theorem \ref{thm:main-new} provides an explicit ``upper bound'' for the complexity of the Feynman quadrics-motive. In particular, the ``obstruction'' to the mixed-Tate property is explicitly realized by the Prym variety $\mathrm{Prym}(\widetilde{C}/C)$.
\begin{remark}[Related work]
Let $(\Gamma, m)$ be the massive sunset graph. In the particular case of equal masses, Bloch-Vanhove proved in \cite[Lem.~6.1]{BloHov1} that the Feynman motive $M_{(\Gamma, m)}$ is also not mixed-Tate. Their approach is based on an explicit description of the parametric Feynman integral in terms of elliptic curves. In contrast, our approach to prove that the Feynman quadrics-motive $M^Q_{(\Gamma, m)}\not\simeq M_{(\Gamma, m)}$ is not mixed-Tate is based on an explicit description of the Chow motives of complete intersections of two and three quadric hypersurfaces; consult \S\ref{sec:proof} for details.
\end{remark}

\section{Feynman quadrics-motive}\label{Qsec}
Let $D>0$ be the spacetime dimension and $(\Gamma, m, \kappa)$ a Feynman graph equipped with a mass parameter $m$ and with external momentum $\kappa$. Recall that $\Gamma$ is a finite, connected, and direct graph. In what follows, we will write $E(\Gamma)$, $E_{\mathrm{ext}}(\Gamma)$ and $E_{\mathrm{int}}(\Gamma)$ for the set of edges, external edges, and internal edges, respectively. Similarly, we will write $V(\Gamma)$ and $V_{\mathrm{int}}(\Gamma)$ for the set of vertices and internal vertices, respectively. Given an edge $e \in E(\Gamma)$, we will denote by $s(e)$, $t(e)$ and $\partial(e)$ its source, target, and boundary, respectively. The {\em mass parameter} $m=(m_i)$ consists of a rational number $m_i \in \bbQ$ indexed by the internal edges $e_i \in E_{\mathrm{int}}(\Gamma)$. In the same vein, the {\em external momentum} $\kappa = (\kappa_j)$ consists of a vector $\kappa_j=(\kappa_{j, r}) \in \bbQ^D$ indexed by the external edges $e_j \in E_{\mathrm{ext}}(\Gamma)$.

\subsection{Feynman integral}\label{sub:Feynman}
To every internal edge $e_i\in E_{\mathrm{int}}(\Gamma)$ associate a ``momentum variable'' $k_i=(k_{i,r})\in \bbA^D$ and the following {\em edge propagator}:
\begin{equation}\label{equadric}
 q_i(k_i) = \sum_{r=1}^D k_{i,r}^2 + m_i^2\,.
\end{equation} 
Under these notations, recall from \cite[\S3.1]{Mar} that the (unrenormalized) {\em Feynman integral} $\cI_{(\Gamma, m, \kappa)}$ associated to the triple $(\Gamma, m, \kappa)$ is defined as follows: 
\begin{equation}\label{IGamma}
C \int \frac{\prod_{v\in V_{\mathrm{int}}(\Gamma)} \delta(\sum_{e_i\in E_{\mathrm{int}}(\Gamma)} \epsilon_{v,i} k_i + \sum_{e_j\in E_{\mathrm{ext}}(\Gamma)} \epsilon_{v,j} \kappa_j)}{\prod_{e_i\in E_{\mathrm{int}}(\Gamma)} q_i(k_i)} \prod_{e_i\in E_{\mathrm{int}}(\Gamma)} \frac{d^D k_i}{(2\pi)^D}\,.
\end{equation}
Some explanations are in order: $C$ stands for the product $\prod_v \lambda_v (2\pi)^{-D}$ with $\lambda_v$ the coupling constant at the vertex $v$; $\epsilon_{v,i}$ for the incidence matrix with entries $1$, $-1$, or $0$, according to whether $v=s(e)$, $v=t(e)$, or $v\notin \partial(e)$, respectively (similarly for $\epsilon_{v,j}$); $\prod_{e_i} d^D k_i$ for the standard volume form in $\bbA^{nD}(\bbR)$ with $n:=\#E_{\mathrm{int}}(\Gamma)$ the number of internal edges of $\Gamma$; and finally $\delta$ stands for the delta function.
\subsection{Quadrics-motive}\label{sub:quadrics-motive} Let us denote by $n:=\#E_{\mathrm{int}}(\Gamma)$ the number of internal edges of $\Gamma$. In what follows, we will always assume that the mass parameter $m$ is {\em positive}, \ie that $m_i >0$ for every internal edge $e_i \in E_{\mathrm{int}}(\Gamma)$.
\begin{notation}\label{not:pairing}
Given any two vectors $v=(v_{i,r}) \in \bbA^{nD}$ and $v'=(v'_{i,r}) \in \bbA^{nD}$, let $\langle v, v' \rangle :=\sum_{i=1}^n\sum_{r=1}^D v_{i,r} v'_{ir}$ and $v^2 :=\langle v, v \rangle =\sum_{i,r} v_{i,r}^2$.
\end{notation}
Note that due to the presence of the mass parameter $m$, the polynomial \eqref{equadric} in $D$ variables is non-homogeneous. By (formally) adding an homogeneous coordinate $x$, we can consider the associated homogeneous polynomial in $D+1$ variables:
\begin{equation}\label{qex}
 q'_i(k_i,x) := \sum_{r=1}^D k_{i,r}^2 + m_i^2x^2\,.
\end{equation} 
Moreover, under the identification of $k_i=(k_{i,r}) \in \bbA^D$ with the vector $v=(v_{j,r})$ of $\bbA^{nD}$ defined as 
$k_{i,r}$ if $i=j$ and $0$ otherwise, the polynomial \eqref{qex} can be considered as an homogeneous polynomial in $nD+1$ variables (where $k=(k_i) \in \bbA^{nD}$):
\begin{equation}\label{qex1}
 q'_i(k,x) := k_i^2+ m_i^2x^2\,.
\end{equation} 
Let us denote by $Q'_i \subset \bbP^{nD}$ the associated quadric hypersurfaces.

The delta function $\delta$ in the numerator of \eqref{IGamma} imposes linear relations between the ``momentum variables'' $k_i = (k_{i,r})\in \bbA^D$. Concretely, every internal vertice $v \in V_{\mathrm{int}}(\Gamma)$ yields the following linear relations:
\begin{equation}\label{linrels}
\sum_{\substack{e_i \in E_{\mathrm{int}}(\Gamma)\\ s(e_i)=v}} k_i + \sum_{\substack{e_j \in E_{\mathrm{ext}}(\Gamma) \\ s(e_j)=v}} \kappa_j= \sum_{\substack{e_i \in E_{\mathrm{int}}(\Gamma) \\ t(e_i)=v}} k_i + \sum_{\substack{e_j \in E_{\mathrm{ext}}(\Gamma)\\  t(j)=v}} \kappa_j .
\end{equation}
Let us write $N$ for the number of {\em independent} linear relations imposed by \eqref{linrels}, and choose $n-N$ {\em independent} variables $l_i$ among $\{k_1, \ldots, k_n\}$. One usually refers to the variables $l_i$ as the ``loop variables''. Indeed, it is known that $N=\# V_{\mathrm{int}}(\Gamma) -1$. Therefore, the difference $n-N=\# E_{\mathrm{int}}(\Gamma)-\# V_{\mathrm{int}}(\Gamma) +1$ is equal to the first Betti number $b_1(\Gamma)$ of the graph $\Gamma$; see \cite[\S5.2]{CoMa}\cite[\S8]{ItZu}. In what follows, we will write $L:=b_1(\Gamma)$ for the ``loop number" of $\Gamma$.
\begin{lemma}\label{FQlem}
Assume that $\kappa=0$. Under this assumption, the intersection of the quadric hypersurfaces $Q'_i \subset \bbP^{nD}$ with the following linear subspace
$$ H_\Gamma := \bigcap_{v\in V_{\mathrm{int}}(\Gamma)} \{ \sum_{\substack{e_i \in E_{\mathrm{int}}(\Gamma)\\ s(e_i)=v}} k_i - \sum_{\substack{e_i \in E_{\mathrm{int}}(\Gamma) \\ t(e_i)=v}} k_i =0 \} \subset \bbP^{nD}$$
determines quadric hypersurfaces $Q_i\subset \bbP^{LD}$, $i=1,\ldots, n$.
\end{lemma}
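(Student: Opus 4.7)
The plan is to verify two things: first, that $H_\Gamma$ is a linear subspace of $\bbP^{nD}$ of projective dimension $LD$ (hence abstractly isomorphic to $\bbP^{LD}$), and second, that for each $i \in \{1,\ldots,n\}$ the restriction of the quadratic form $q'_i$ to $H_\Gamma$ is not identically zero, so that $Q_i := Q'_i \cap H_\Gamma$ is a genuine quadric hypersurface in $H_\Gamma$ rather than all of $H_\Gamma$.

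For the first point, I would unwind the definition of $H_\Gamma$. The defining equations are vector-valued in $\bbA^D$, one such vector equation per internal vertex $v \in V_{\mathrm{int}}(\Gamma)$, each of the form $\sum_{s(e_i)=v} k_i - \sum_{t(e_i)=v} k_i = 0$ (recall $\kappa=0$). Componentwise, this gives $D$ scalar homogeneous linear equations per vertex, and the $D$ components decouple: the coefficient matrix is block-diagonal with $D$ identical blocks, each being the vertex-edge incidence matrix of $\Gamma$ restricted to $V_{\mathrm{int}}$ and $E_{\mathrm{int}}$. Since the text recalls that the rank of this incidence block equals $N=\#V_{\mathrm{int}}(\Gamma)-1$ (a standard fact, referenced via \cite[\S5.2]{CoMa}\cite[\S8]{ItZu}), the full system has rank $ND$. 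Moreover, none of these equations involves the homogeneous coordinate $x$, so they cut $\bbP^{nD}$ down to a linear subspace of projective dimension $nD - ND = (n-N)D = LD$, as claimed.

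For the second point, I would observe that each defining equation of $H_\Gamma$ is linear in the $k_{i,r}$ alone and free of $x$; in particular $x$ is not contained in the homogeneous ideal $I(H_\Gamma) \subset F[k_{1,1},\ldots,k_{n,D},x]$. Consequently, the restriction of $q'_i(k,x) = k_i^2 + m_i^2 x^2$ to $H_\Gamma$ still has $m_i^2 x^2$ as a nonzero monomial, because any replacement coming from $I(H_\Gamma)$ substitutes linear combinations of the $k_{j,r}$ for other $k_{j',r}$, and thus cannot cancel the $x^2$-term. Since the mass parameter is assumed positive ($m_i > 0$), we have $m_i^2 \ne 0$, so $q'_i|_{H_\Gamma}$ is a nonzero homogeneous quadratic polynomial, and $Q_i = Q'_i \cap H_\Gamma$ is a quadric hypersurface in $H_\Gamma \cong \bbP^{LD}$.

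The main (mild) obstacle is purely bookkeeping: keeping the count of independent equations straight, in particular noticing the block-diagonal structure of the linear system so that the rank scales by a factor of $D$, and verifying that the homogenizing variable $x$ is untouched by the linear ideal so that the mass term $m_i^2 x^2$ survives restriction. Once these two points are recorded, the conclusion is immediate.
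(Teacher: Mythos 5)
Your proof is correct and follows essentially the same route as the paper's, which parametrizes $H_\Gamma$ by the loop variables $\ell_1,\ldots,\ell_L$ together with the auxiliary coordinate $x$ and restricts each $q'_i$ to these coordinates. You supply two details the paper leaves implicit — the rank count $ND$ from the block-diagonal incidence structure, and the observation that the $m_i^2 x^2$ term guarantees each restricted form is nonzero — both of which are sound and compatible with the surrounding text.
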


\proof  When $\kappa=0$, the relations \eqref{linrels} become homogeneous in the variables $k_i$
and determine the linear subspace $H_\Gamma$. Note that the loop variables $\ell_i$
and the auxiliary variable $x$ are homogeneous coordinates of $H_\Gamma$. Let us then write 
$q_i(\ell,x)$ for the restriction of \eqref{qex1} to $H_\Gamma$. Since $H_\Gamma \simeq \bbP^{LD}$, the intersections $Q_i:= Q'_i\cap H_\Gamma$ agree with the quadric hypersurfaces in $\bbP^{LD}$
defined by the equations $\{q_i(\ell,x)=0\}$.
\endproof

\begin{notation}\label{uvardef}
Let $u=(u_0: \cdots : u_{LD})$ be projective coordinates on the projective space $\bbP^{LD}$ corresponding to the following variables:
\begin{eqnarray}\label{uvars}
u_0:=x & (u_1, \ldots, u_D):=\ell_1 \quad \cdots & (u_{(L-1)D}, \ldots, u_{LD}):=\ell_{L}\,.
\end{eqnarray}
\end{notation}
Note that the parameterizing space of all quadric hypersurfaces in $\bbP^{LD}$
is the projective space $\bbP^{\binom{LD+2}{2} -1}$ of 
symmetric $(LD+1)\times (LD+1)$-matrices up to scalar multiples.
Inside this parameterizing space we have the discriminant hypersurface $\cD$ consisting of all those quadratic
forms with non-trivial kernel. Recall that a {\em net of $n$ quadric hypersurfaces} in $\bbP^{LD}$ consists of an embedding $\rho\colon \bbP^{n-1} \hookrightarrow \bbP^{\binom{LD+2}{2} -1}$. 

Consider the following net of quadrics
\begin{eqnarray}\label{eq:net1}
\rho\colon \bbP^{n-1} \hookrightarrow \bbP^{\binom{LD+2}{2} -1} && (0: \cdots : 0: \underset{i}{1} : 0 : \cdots: 0) \mapsto Q_i\,,
\end{eqnarray}
where $Q_i$ stands for the quadric hypersurface of the above Lemma \ref{FQlem}.

\begin{lemma}\label{FQ3}
The above net of quadrics \eqref{eq:net1} has the following properties:
\begin{itemize}
\item[(i)] The quadric hypersurfaces $Q_i$ belong to $\bbP^{\binom{LD+2}{2} -1}(\bbR)$, \ie the defining quadratic form $q_i$ of the quadric $Q_i$ is real.
\item[(ii)]  The symmetric matrices $A_i$, defined by the equalities $q_i(u)=\langle u, A_i u\rangle$, can be written as $A_i =T^\dagger_i T_i$, where $T_i^\dagger$ stands for the adjoint of the matrix $T_i$ with respect to the bilinear form of Notation~\ref{not:pairing}.
\item[(iii)] Let $P$ be the projection $(u_0,\ldots,u_{LD})\mapsto (u_1,\ldots, u_{LD})$ and
$\bar T_i$ the matrix $PT_i P$. Under these notations,  the matrices $\bar T_i$ satisfy the following momentum conservation condition: $\sum_{s(e_i)=v} \bar T_i = \sum_{t(e_i)=v} \bar T_i$.
\end{itemize}
\end{lemma}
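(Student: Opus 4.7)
The plan is to make all three assertions concrete by writing down the matrix $T_i$ explicitly in terms of the combinatorics of $\Gamma$. First, for (i), I would observe that the polynomial $q'_i(k,x) = k_i^2 + m_i^2 x^2$ from \eqref{qex1} has rational coefficients (as $m_i \in \bbQ$), and the linear subspace $H_\Gamma$ is cut out by equations with $\pm 1$ entries drawn from the incidence matrix of $\Gamma$. After solving the incidence relations to express each edge momentum $k_i$ as a rational linear combination $k_i(\ell) = \sum_{j=1}^L c_{ij}\ell_j$ of the loop variables, the restricted quadratic form $q_i(\ell,x) = k_i(\ell)^2 + m_i^2 x^2$ inherits rational (hence real) coefficients, which proves (i).

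For (ii), the decisive observation is that $q_i$ is manifestly a sum of squares, so its Gram matrix $A_i$ is positive semidefinite and therefore admits a square-root factorization $A_i = T_i^\dagger T_i$ over~$\bbR$. Concretely, using the coordinates $u = (x,\ell_1,\ldots,\ell_L)$ from Notation~\ref{uvardef}, I would define $T_i$ as the square matrix sending $u$ to the vector whose first entry is $m_i x$, whose next $D$ entries are the components of $k_i(\ell)$, and whose remaining entries are zero. A one-line check then gives $\langle T_i u, T_i u\rangle = m_i^2 x^2 + k_i(\ell)^2 = q_i(u) = \langle u, A_i u\rangle$, whence $A_i = T_i^\dagger T_i$ by the defining property of the adjoint relative to the pairing of Notation~\ref{not:pairing}.

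For (iii), I would note that $\bar T_i = P T_i P$ simply records the dependence of $k_i$ on the loop variables $\ell_j$: the left-hand $P$ annihilates the row of $T_i$ containing $m_i x$, while the right-hand $P$ annihilates the $x$-column (which is irrelevant since $k_i(\ell)$ already contains no $x$). Thus the non-zero rows of $\bar T_i$ are exactly the coefficients $(c_{ij})$ expressing the components of $k_i$ in terms of the $\ell_j$. The defining equation of $H_\Gamma$ at an internal vertex $v$, with $\kappa=0$, reads $\sum_{s(e_i)=v} k_i(\ell) - \sum_{t(e_i)=v} k_i(\ell) = 0$ identically in $\ell$; comparing coefficients of each $\ell_j$ yields the desired matrix identity $\sum_{s(e_i)=v} \bar T_i = \sum_{t(e_i)=v} \bar T_i$.

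The lemma is essentially bookkeeping once the explicit matrix $T_i$ has been fixed, and I expect no genuine obstacle; the only mildly delicate point is pinning down a consistent row convention in (ii), so that the projection $P$ in (iii) interacts with $T_i$ in the anticipated way and the identification between the matrix $\bar T_i$ and the linear map $\ell \mapsto k_i(\ell)$ becomes transparent.
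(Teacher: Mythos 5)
Your proof is correct, and it is actually more careful than the paper's own treatment of item (iii). Items (i) and (ii) are handled the same way in both arguments: $q_i$ inherits rational (hence real) coefficients from $q'_i$ and the rational linear subspace $H_\Gamma$, and being manifestly a sum of squares it factors as $A_i=T_i^\dagger T_i$. The divergence is in (iii). The paper asserts that $\bar T_i=PT_iP$ is diagonal with entries in $\{0,1\}$, recording ``which variables $u_j$ occur in $q_i$''; read literally this cannot yield the momentum conservation identity --- on the sunset graph it gives $\bar T_1+\bar T_2+\bar T_3=2I_{2D}\neq 0$ --- and in any case, for a spanning-tree edge $e_i$ the form $q_i(u)=k_i(\ell)^2+m_i^2x^2$ has off-diagonal Gram entries once $k_i$ involves more than one loop variable, so no diagonal $0/1$ matrix $T_i$ can factor $A_i$. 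Your explicit choice of $T_i$, sending $u=(x,\ell_1,\ldots,\ell_L)$ to $(m_ix,\,k_i(\ell),\,0,\ldots,0)$, is a genuine fix: one still has $\langle T_iu,T_iu\rangle=m_i^2x^2+k_i(\ell)^2=q_i(u)$, and $\bar T_i$ becomes the block matrix for $\ell\mapsto(k_i(\ell),0,\ldots,0)$, so $\sum_{s(e_i)=v}\bar T_i-\sum_{t(e_i)=v}\bar T_i$ acts by $\ell\mapsto(\sum_{s(e_i)=v}k_i(\ell)-\sum_{t(e_i)=v}k_i(\ell),0,\ldots,0)$, which vanishes identically precisely because these are the linear relations cutting out $H_\Gamma$. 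This also makes explicit --- and uses correctly --- the point that $T_i$ in the factorization $A_i=T_i^\dagger T_i$ is not unique: item (iii) only requires the momentum conservation to hold for \emph{some} admissible choice, and your choice does the job.
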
 
\begin{proof}
Item (i) follows from the combination of Lemma~\ref{FQlem} with the description \eqref{qex1} of the quadratic form $q'_i$. The description \eqref{qex1} of the quadratic forms $q'_i$ also implies that the matrix $A_i$ can be written as $A_i = T_i^\dagger T_i$ with $q_i(u)=\langle T_i u, T_i u \rangle$. This shows item (ii). In what concerns item (iii), note that thanks to Lemma~\ref{FQlem} we can order the internal edges of the Feynman graph $\Gamma$ in such a way that the first $L$ quadratic forms $q_i$ do {\em not} depend on the variables $l_j \in \bbA^D$ with $j \neq i$. The dependence of the remaining quadratic forms $q_i$ on the $l_i$ variables is dictated by the momentum conservation condition \eqref{linrels} (with $\kappa_j=0$). Note that the matrices $\bar T_i$ are diagonal and that its entries are either $1$ or $0$. This corresponds to which variables $u_j, j \geq 1$, occur in $q_i$ or not. Therefore, the condition \eqref{linrels} (with $\kappa_j=0$) can be written as $ \sum_{s(e_i)=v} \bar T_i u = \sum_{t(e_i)=v} \bar T_i u$. Finally, since the latter equality holds for all the variables $u_j$, it can be re-written as the following momentum conservation condition: $\sum_{s(e_i)=v} \bar T_i = \sum_{t(e_i)=v} \bar T_i$. This proves item (iii).
\end{proof}

\begin{definition}
A {\em one-parameter deformation} of a net of $n$ quadric hypersurfaces $\rho:\bbP^{n-1} \hookrightarrow \bbP^{\binom{LD+2}{2} -1}$ is
a morphism $\tilde\rho: \bbP^{n-1}\times \bbA^1 
\to \bbP^{\binom{LD+2}{2} -1}$ such that $\rho=\tilde\rho |_{\bbP^{n-1}\times \{ 0 \}}$. Given a point $\epsilon \in \bbA^1(\bbQ)$, with $\epsilon\neq 0$, we will write $\rho_\epsilon$ for the associated net of quadrics $\tilde\rho |_{\bbP^{n-1}\times \{ \epsilon \}}$ and call it the {\em $\epsilon$-deformation of $\rho$}.
\end{definition}
\begin{proposition}\label{3quaddef}
There exists a one-parameter deformation $\tilde\rho$ of the net of quadrics \eqref{eq:net1} such that for sufficiently small points $\epsilon \in \bbA^1(\bbQ)$ the $\epsilon$-deformations
\begin{eqnarray}\label{eq:deformation}
\rho_\epsilon\colon \bbP^{n-1} \hookrightarrow \bbP^{\binom{L
D+2}{2} -1} && 
(0: \cdots : 0: \underset{i}{1} : 0 : \cdots: 0) \mapsto Q_{i, \epsilon}
\end{eqnarray}
have the following properties:
\begin{itemize}
\item[(i)] The quadrics $Q_{i,\epsilon}$ belong to $\bbP^{\binom{LD+2}{2} -1}\backslash \cD$, \ie they are smooth.
\item[(ii)] The quadrics $Q_{i,\epsilon}$ belong to $\bbP^{\binom{L D+2}{2} -1}(\bbR)$.
\item[(iii)] The symmetric matrices $A_{i,\epsilon}$ can be written as $A_{i,\epsilon} =T^\dagger_{i,\epsilon} T_{i,\epsilon}$.
\item[(iv)] Let $P$ be the projection $(u_0,\ldots,u_{LD})\mapsto (u_1,\ldots, u_{LD})$ and 
$\bar T_{i,\epsilon}$ the matrix $PT_{i,\epsilon} P$. Under these notations, the matrices $\bar T_{i,\epsilon}$ satisfy the following momentum conservation condition: $\sum_{s(e_i)=v} \bar T_{i,\epsilon} = \sum_{t(e_i)=v} \bar T_{i,\epsilon}$.
\end{itemize}
\end{proposition}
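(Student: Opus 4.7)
The plan is to deform each matrix $T_i$ of Lemma~\ref{FQ3} by a scalar identity-term weighted along the edges of $\Gamma$. Choose once and for all a tuple $(c_i)_{i=1}^n\in\bbQ^n$ of \emph{non-zero} rational numbers satisfying the circulation identity
\begin{equation*}
\sum_{s(e_i)=v} c_i \;=\; \sum_{t(e_i)=v} c_i \qquad \text{for every } v\in V_{\mathrm{int}}(\Gamma).
\end{equation*}
The space of such divergence-free edge weightings on $\Gamma$ has dimension $b_1(\Gamma)=L$. Since the sunset graph is bridgeless, this space contains a tuple with all coordinates non-zero; concretely, for a suitable orientation of the three edges one may take $(c_1,c_2,c_3)=(1,1,2)$. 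The deformation is then defined by
\begin{equation*}
T_{i,\epsilon}\;:=\;T_i+\epsilon\,c_i\,I_{LD+1},\qquad A_{i,\epsilon}\;:=\;T_{i,\epsilon}^\dagger\,T_{i,\epsilon},
\end{equation*}
and $\tilde\rho\colon\bbP^{n-1}\times\bbA^1\to\bbP^{\binom{LD+2}{2}-1}$ sends the pair $((0:\cdots:\underset{i}{1}:\cdots:0),\epsilon)$ to the quadric cut out by $A_{i,\epsilon}$; clearly $\tilde\rho|_{\bbP^{n-1}\times\{0\}}=\rho$.

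Properties (ii) and (iii) are immediate from the construction. For (iv), the linearity of the map $T\mapsto PTP$ yields $\bar T_{i,\epsilon}=\bar T_i+\epsilon\,c_i\,I_{LD}$; since both the $(\bar T_i)_i$ (by Lemma~\ref{FQ3}(iii)) and the scalars $(c_i)_i$ satisfy the conservation identity, the sum $(\bar T_{i,\epsilon})_i$ does as well. For (i), note that a non-zero $v\in\ker T_{i,\epsilon}$ must be an eigenvector of $T_i$ with eigenvalue $-\epsilon\,c_i$. Since the non-zero spectrum of $T_i$ is a finite subset of $\bbC^\times$, it is bounded away from $0$; taking $|\epsilon|$ strictly smaller than $\min_i |c_i|^{-1}\cdot\mathrm{dist}\bigl(0,\mathrm{spec}(T_i)\setminus\{0\}\bigr)$ therefore ensures that $-\epsilon c_i$ is neither $0$ nor a non-zero eigenvalue of $T_i$, for every $i$ simultaneously. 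Consequently each $T_{i,\epsilon}$ is invertible, $A_{i,\epsilon}$ is positive definite, and $Q_{i,\epsilon}$ is smooth; in particular $A_{i,\epsilon}\notin\cD$.

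The main subtlety is the balancing act between the smoothness requirement (which forces $c_i\neq 0$ for every edge, so that the perturbation actually reaches every quadric) and the graph-theoretic circulation constraint (a codimension-$(n-L)$ linear condition on the $c_i$'s). For the sunset graph this is easily resolved since $L=2$ and the nowhere-vanishing locus in the two-dimensional circulation space is Zariski-open and non-empty; the same argument carries over to any bridgeless Feynman graph.
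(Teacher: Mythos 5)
Your proof is correct and takes a genuinely different route. The paper constructs the deformation in two stages: first a soft existence argument (the complement of the discriminant $\cD$ in the space of real quadrics is Zariski open, eigenvalues vary continuously, and signs can be flipped to obtain a factorization $A_{i,\epsilon}=T^\dagger_{i,\epsilon}T_{i,\epsilon}$), and then a separate spanning-tree induction: one deforms the $L$ quadratic forms attached to the edges of $\Gamma\setminus\tau$ freely, and propagates the deformation inward along $\tau$, leaf by leaf, so that momentum conservation holds at every step. You instead produce a single closed formula $T_{i,\epsilon}=T_i+\epsilon\,c_i I$, where $(c_i)$ is a nowhere-zero rational element of the cycle space; this makes (ii)--(iii) tautological, reduces (iv) to the linearity of the circulation condition, and reduces (i) to a spectral-gap bound on $\epsilon$. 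The advantages of your argument are that it is explicit, shorter, and transparently uniform in $\epsilon$; a further pleasant feature is that it makes the role of $1$PI visible: a nowhere-zero circulation exists precisely when $\Gamma$ is bridgeless, which is the class of graphs the paper actually works with (and which contains the sunset). The cost is that your construction perturbs the mass coefficient $m_i^2$ to $(m_i+\epsilon c_i)^2$, whereas the paper's spanning-tree argument deliberately keeps $m_i^2$ undeformed; for the statement of the Proposition this is immaterial, since one only needs $\tilde\rho|_{\epsilon=0}=\rho$, but it is a genuine divergence from the paper's intention. Two cosmetic points: your concrete tuple $(c_1,c_2,c_3)=(1,1,2)$ presupposes the orientation in which $e_3$ runs opposite to $e_1,e_2$ (with all three edges pointing the same way one needs, \textsl{e.g.}, $(1,1,-2)$), which you do flag but could state; and the displayed bound on $|\epsilon|$ reads more cleanly as $\min_i\bigl(\mathrm{dist}(0,\mathrm{spec}(T_i)\setminus\{0\})/|c_i|\bigr)$.
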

\begin{corollary}\label{noreal}
The quadrics $Q_{i,\epsilon}$ don't have real points.
\end{corollary}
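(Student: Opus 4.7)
The plan is to combine items (i) and (iii) of Proposition \ref{3quaddef}: the factorization $A_{i,\epsilon} = T^\dagger_{i,\epsilon} T_{i,\epsilon}$ expresses $q_{i,\epsilon}$ as a sum of squares with respect to the pairing of Notation \ref{not:pairing}, while smoothness of $Q_{i,\epsilon}$ forces $T_{i,\epsilon}$ to be injective over $\bbR$. Together these prevent real points.

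More precisely, first I would compute, for an arbitrary real vector $u \in \bbA^{LD+1}(\bbR)$, the value of the defining quadratic form:
\begin{equation*}
q_{i,\epsilon}(u) \;=\; \langle u,\, A_{i,\epsilon}\, u \rangle \;=\; \langle u,\, T^\dagger_{i,\epsilon} T_{i,\epsilon}\, u \rangle \;=\; \langle T_{i,\epsilon}\, u,\, T_{i,\epsilon}\, u \rangle \;=\; (T_{i,\epsilon}\, u)^2.
\end{equation*}
Since the pairing of Notation \ref{not:pairing} is the standard Euclidean inner product, $(T_{i,\epsilon}\, u)^2$ is a sum of squares of real numbers, hence nonnegative, and vanishes if and only if $T_{i,\epsilon}\, u = 0$.

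Next I would invoke item (i) of Proposition \ref{3quaddef}: smoothness of $Q_{i,\epsilon}$ means that the symmetric matrix $A_{i,\epsilon}$ is nondegenerate. From the factorization $A_{i,\epsilon} = T^\dagger_{i,\epsilon} T_{i,\epsilon}$, any vector in the kernel of $T_{i,\epsilon}$ lies in the kernel of $A_{i,\epsilon}$; hence $T_{i,\epsilon}$ itself must have trivial kernel. Therefore, for every nonzero real vector $u$, we have $T_{i,\epsilon}\, u \neq 0$ and consequently $q_{i,\epsilon}(u) = (T_{i,\epsilon}\, u)^2 > 0$. This shows that $Q_{i,\epsilon}$ has no real points, concluding the corollary.

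There is no real obstacle here; the subtle point is simply recognizing that \emph{both} the factorization provided by item (iii) and the smoothness/nondegeneracy provided by item (i) are needed. The factorization alone only guarantees $q_{i,\epsilon} \geq 0$ on real points, while smoothness rules out the possibility of a nontrivial real zero. Neither condition would suffice by itself: a degenerate sum of squares can vanish on a nonzero real vector, and a smooth real quadric in general can certainly have real points.
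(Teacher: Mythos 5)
Your proof is correct and follows essentially the same route as the paper: both use the factorization $A_{i,\epsilon}=T^\dagger_{i,\epsilon}T_{i,\epsilon}$ from item (iii) together with the smoothness/nondegeneracy from item (i) to conclude that $q_{i,\epsilon}$ is positive-definite on real vectors. The only small difference is that the paper also invokes item (ii) explicitly and then diagonalizes (real symmetric matrix with strictly positive eigenvalues), whereas you argue directly via injectivity of $T_{i,\epsilon}$; note that your step ``$(T_{i,\epsilon}u)^2$ is a sum of squares of real numbers'' implicitly also relies on item (ii), since the bilinear pairing of Notation \ref{not:pairing} is not Hermitian and $\langle T_{i,\epsilon}u, T_{i,\epsilon}u\rangle$ is a sum of real squares only when $T_{i,\epsilon}$ is a real matrix.
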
 
\begin{proof} 
Items (ii)-(iii) of Proposition \ref{3quaddef} imply that the symmetric matrices $A_{i,\epsilon}$ have real non-negative eigenvalues $\lambda_{i,\epsilon}$. Moreover, item (i) implies that these eigenvalues $\lambda_{i,\epsilon}$ are strictly positive. Therefore, we have $q_{i,\epsilon}(u) =\sum_{i=0}^{LD} \lambda_{i,\epsilon} u_i^2$ with $\lambda_{i,\epsilon}>0$. This shows that the associated quadrics $Q_{i, \epsilon}$ don't have real points.
\end{proof}
\begin{proof}{(of Proposition \ref{3quaddef})} Thanks to Lemma~\ref{FQ3}, the quadrics $Q_i$ of Lemma \ref{FQlem} belong to $\bbP^{\binom{L D+2}{2} -1}(\bbR)$. However, they are singular in general, \ie they belong to the discriminant divisor $\cD$. Nevertheless, since the complement $\bbP^{\binom{LD+2}{2} -1}\backslash \cD$ is a Zariski open set, there exists then a one-parameter deformation $\tilde\rho$ of the net of quadrics \eqref{eq:net1} such that for a generic point $\epsilon \in \bbA^1(\bbQ)$ the associated $\epsilon$-deformation \eqref{eq:deformation} satisfies conditions (i)-(ii). The corresponding quadratic forms can then be diagonalized $q_{i,\epsilon}(u) =\sum_i  \lambda_{i,\epsilon} u_i^2$. Moreover, the eigenvalues $\lambda_{i,\epsilon}$ are real, non-zero, and converge to the eigenvalues $\lambda_i$ of the quadratic forms $q_i$ when $\epsilon \to 0$. If $\lambda_i>0$, then for a sufficiently small point $\epsilon \in \bbA^1(\bbQ)$ we also have $\lambda_{i,\epsilon}>0$. If $\lambda_i=0$, then for a sufficiently small point $\epsilon \in \bbA^1(\bbQ)$, we have $\lambda_{i,\epsilon}>0$ or $\lambda_{i,\epsilon}<0$. In the latter case, we can always change the sign in order to make all the eigenvalues positive. This yields a new $\epsilon$-deformation which not only satisfies conditions (i)-(ii) but also condition (iii).

Let us now prove condition (iv). Recall that a {\em spanning tree $\tau$} of a connected graph $\Gamma$ is a 
connected subgraph of $\Gamma$ which is a tree and which contains all the vertices of $\Gamma$. The Euler characteristic formula implies immediately that the complement 
$\Gamma\backslash \tau$ consists of $L=b_1(\Gamma)$ edges. We need to show that if the
original matrices $T_i$ satisfy the momentum conservation condition $\sum_{s(e_i)=v} \bar T_i = \sum_{t(e_i)=v} \bar T_i$, 
then there is a non-empty set of $\epsilon$-deformations $\rho_\epsilon$ such that the associated matrices $T_{i,\epsilon}$
also satisfy the momentum conservation condition $\sum_{s(e_i)=v} \bar T_{i,\epsilon} = \sum_{t(e_i)=v} \bar T_{i,\epsilon}$. 
We will show that this is possible by first choosing a spanning tree $\tau$ for the Feynman graph $\Gamma$, 
then by constructing a $\epsilon$-deformation $q_{i,\epsilon}$ of the quadratic forms $q_i$ associated to the $L$
edges in the complement of the spanning tree, and finally by showing that there is a unique way to
extend the deformation to the remaining quadratic forms $q_i$ associated to the edges of the
spanning tree so that momentum conservation condition (as well as the above conditions (i)-(iii)) holds.

If $\tau$ is a spanning tree of $\Gamma$, then contracting $\tau$ to a single
vertex gives a bouquet of $L$ circles. Hence, the complement $\Gamma\backslash \tau$
provides a choice of $L$ edges all belonging  to different loops (=different generators of the
first homology) of $\Gamma$. This implies that, in the loop variables $\ell_i$, we can write the quadratic forms
$q_i$ with $i=1,\ldots, L$, associated to the edges $e_i$ in the complement $\Gamma\backslash \tau$
as functions of a single loop variable $\ell_i=(\ell_{i,1},\ldots, \ell_{i,D})$ and of the variable $x$,
independently of the remaining variables $\ell_j$ with $j\neq i$; see  Lemma~\ref{FQlem}. Now, we can deform $q_i$ into $q_{i,\epsilon}$ just by adding terms
of the form $\lambda_{j,k,\epsilon} \ell^2_{j,k}$ for the remaining variables that do not appear in $q_i$. Concretely, we have 
$q_{i,\epsilon}(x,\ell_1,\ldots, \ell_L) := q_i(\ell_i) + \sum_{j\neq i} \lambda_{j,k,\epsilon} \ell^2_{j,k}$, where the terms $\lambda_{j,k,\epsilon}$ are chosen so that the above conditions (i)-(ii)-(iii) are satisfied. It remains to show that it is possible to compatibly choose the deformations
$q_{i,\epsilon}$ of the remaining $q_i$, with $i=L+1,\ldots,n$, so that the momentum conservation condition $\sum_{s(e_i)=v} \bar T_{i,\epsilon} = \sum_{t(e_i)=v} \bar T_{i,\epsilon}$ as well as the above conditions (i)-(iii) also holds. These remaining edges are the edges
of the spanning tree $\tau$.

The strategy is to extend the deformation to the edges of the spanning tree, by imposing the
momentum conservation condition, starting with the ends of the tree and proceed inward.
Consider first the vertices that are incident to only one
edge in the spanning tree. This means that, for all other edges incident to the same vertex, the
deformation $q_{i,\epsilon}$ has already be assigned. Thus, the matrices $\bar T_{i,\epsilon}$
for all but one of the edges are knows, and the momentum conservation equation at the vertex
fixes what the matrix $\bar T_{i,\epsilon}$ for the last remaining edge (the one in the spanning tree)
should be.

Since the coefficient of $x^2$ is fixed to be the mass $m_i^2 >0$,
which we can leave undeformed, determining $\bar T_{i,\epsilon}$ suffices to determine 
the full $T_{i,\epsilon}$ for this remaining edge, hence the quadratic form $q_{i,\epsilon}$
is also determined.

To proceed to the next step, observe that, after this step there must be vertices
in the spanning tree for which the $q_{i,\epsilon}$ for all but one of the adjacent edges
have already been determined. Indeed, we can just remove from the graph all the vertices
for which all adjacent edges have $q_{i,\epsilon}$ already determined. The intersection of
the original spanning tree with the remaining graph is a spanning tree for this smaller
graph and we can repeat the first step.

This implies that we can again uniquely determine the $\bar T_{i,\epsilon}$ (hence the
$T_{i,\epsilon}$ and the $q_{i,\epsilon}$) for the remaining edge by imposing the
momentum conservation condition.
Iterating this procedure exhausts all the edges
of the spanning tree. Each time, in diagonal form the $T_{j,\epsilon}$ have non-zero
eigenvalues with either positive or negative sign, hence the corresponding $A_{j,\epsilon}=
T^\dagger_{j,\epsilon} T_{j,\epsilon}$ has strictly positive $\lambda_{j,\epsilon}>0$ and
satisfies (i)-(ii)-(iii) in addition to satisfying (iv) by construction.
\end{proof} 
\begin{definition}\label{FQmotdef}
Let $(\Gamma, m)$ be a Feynman graph equipped with a mass parameter $m$ (and with trivial external momentum $\kappa$). The associated {\em Feynman quadrics-motive} $M^Q_{(\Gamma,m)}$ is defined as the Voevodsky's mixed motive $M(\bbP^{LD}\backslash Q_{(\Gamma,m)})_\bbQ \in \mathrm{DM}_{\mathrm{gm}}(F)_\bbQ$, where 
$Q_{(\Gamma, m)}:=\bigcup_{i=1}^n Q_{i,\epsilon}$ stands for the union of the quadric hypersurfaces 
$Q_{i,\epsilon} \subset \bbP^{LD}$ introduced in Proposition \ref{3quaddef}.
\end{definition}

\begin{remark}\label{rk:last}
Let $(\Gamma, m, \kappa)$ be a Feynman graph equipped with a mass parameter and with external momentum. Note that Definition \ref{FQmotdef} holds similarly in the case where the ingoing momentum (in the left-hand side of \eqref{linrels}) equals the outgoing momentum (in the right-hand side of \eqref{linrels}). In particular, we can also consider the massive sunset graph equipped with (ingoing=outgoing) momentum. 
\end{remark}

\subsection{Regularization and Renormalization}\label{sub:Regularization}
In this subsection we express the divergent Feynman integral \eqref{IGamma} as a period of the Feynman quadrics-motive $M^Q_{(\Gamma, m)}$. Recall from \cite[\S8]{ItZu} that the {\em superficial degree of divergence} of the Feynman graph $\Gamma$ is defined as $\delta(\Gamma) := L D - 2n$. When $\delta(\Gamma)\geq 0$, the Feynman integral \eqref{IGamma} is divergent at infinity 
(=UV divergence). In the particular case where $\delta(\Gamma)=0$ the Feynman integral \eqref{IGamma} is moreover logarithmically divergent.

Following Notation~\ref{uvardef}, consider the following differential form  
\begin{equation}\label{omegaalpha}
 \omega := \sum_{i=0}^{LD} (-1)^i u_i \, du_1\wedge \cdots \wedge \widehat{du_i}\wedge \cdots \wedge du_{LD}
\end{equation} 
as well as the algebraic differential forms
\begin{eqnarray}\label{etaalpha}
\eta_\alpha :=\frac{\omega}{\prod_{i=1}^n q_i^\alpha} && \eta_{\alpha,\epsilon} :=\frac{\omega}{\prod_{i=1}^n q_{i,\epsilon}^\alpha}\,,
\end{eqnarray}
where $\alpha \in \bbN$ and $q_i$, resp. $q_{i, \epsilon}$, stands for the quadratic form corresponding to the quadric hypersurface $Q_i$, resp. $Q_{i, \epsilon}$, of Lemma \ref{FQlem}, resp. Proposition \ref{3quaddef} 
\begin{lemma}\label{etaFInt}
The algebraic differential forms \eqref{etaalpha} have the following properties:
\begin{itemize}
\item[(i)] When $\alpha=1$, the Feynman integral \eqref{IGamma} is equal to
\begin{equation}\label{etaU0int}
\frac{C}{(2\pi)^D} \cdot \int_{\bbA^{LD}(\bbR)} \eta_{1}\,,
\end{equation}
where $\bbA^{LD} \subset \bbP^{LD}$ stands for the affine chart of coordinates $(1,u_1,\ldots,u_{LD})$.
\item[(ii)] When $\alpha > \frac{LD}{2n}$, the following integral
\begin{equation}\label{intetaalpha}
\int_{\bbA^{LD}(\bbR)} \eta_{\alpha,\epsilon} = \int_{\bbP^{2D}(\bbR)} \eta_{\alpha,\epsilon} 
\end{equation}
is convergent and a period of $\bbP^{LD}\backslash Q_{(\Gamma,m)}$.
\end{itemize}
\end{lemma}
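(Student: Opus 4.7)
The plan for part (i) is to reduce the $nD$-dimensional integral \eqref{IGamma} to an $LD$-dimensional integral over the loop variables by explicitly integrating out the delta functions. Since $\kappa = 0$, the delta constraints are precisely the homogeneous linear equations \eqref{linrels} cutting out the linear subspace $H_\Gamma$ of Lemma~\ref{FQlem}. Among the $\#V_{\mathrm{int}}(\Gamma)$ incidence relations, exactly one is redundant (their sum vanishes identically), leaving $N = \#V_{\mathrm{int}}(\Gamma) - 1$ independent $D$-component delta functions. Choosing the loop variables $\ell_1, \ldots, \ell_L$ to correspond to edges in the complement of a spanning tree of $\Gamma$, the non-loop $k_i$ become $\bbZ$-linear combinations of the $\ell_j$; since the relevant submatrix of the incidence matrix has entries in $\{0, \pm 1\}$ and is unimodular, the Jacobian of this integration is $\pm 1$.

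After integrating out the deltas, I would identify the remaining $LD$ real variables with the affine coordinates $(u_1, \ldots, u_{LD})=(\ell_1,\ldots,\ell_L)$ on the chart $\{u_0 = 1\}$ of $\bbP^{LD} \cong H_\Gamma$. Under this identification, the restriction of the homogenized quadric \eqref{qex1} to $H_\Gamma$ evaluated at $u_0 = 1$ coincides with the original non-homogeneous quadric \eqref{equadric}, and the form $\omega$ of \eqref{omegaalpha} collapses to $du_1 \wedge \cdots \wedge du_{LD}$, since all summands with $i \geq 1$ contain the factor $du_0 = 0$. Reassembling the resulting expression and carrying the $(2\pi)^{-D}$ factors through the delta integration yields \eqref{etaU0int}; the only nontrivial bookkeeping is verifying that the overall prefactor $\frac{C}{(2\pi)^D}$ comes out as stated.

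For part (ii), the convergence claim rests on Corollary~\ref{noreal}: each quadratic form $q_{i, \epsilon}$ is positive-definite on $\bbR^{LD+1}$, so diagonalization gives a lower bound $q_{i, \epsilon}(u) \geq c_i \|u\|^2$ with $c_i > 0$. Restricting to the chart $\{u_0 = 1\}$ yields $q_{i, \epsilon}(1, u_1, \ldots, u_{LD}) \geq c_i (1 + \|u\|^2)$, so the integrand $|\eta_{\alpha, \epsilon}|$ is dominated on $\bbR^{LD}$ by a constant multiple of $(1 + \|u\|^2)^{-n\alpha}$. A standard polar-coordinate estimate shows that this majorant is integrable precisely when $2n\alpha > LD$, which is the hypothesis. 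The equality between the affine and the projective integrals then follows from the fact that the complement $\bbP^{LD}(\bbR) \setminus \bbA^{LD}(\bbR) = \bbP^{LD-1}(\bbR)$ has Lebesgue measure zero, combined with the absolute convergence just established.

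Finally, to identify the convergent integral as a period of $\bbP^{LD} \setminus Q_{(\Gamma, m)}$, I would note that $\eta_{\alpha, \epsilon}$ is a meromorphic top form on $\bbP^{LD}$ with poles contained in $Q_{(\Gamma, m)}$, and hence restricts to an algebraic top form on the smooth quasi-projective variety $\bbP^{LD} \setminus Q_{(\Gamma, m)}$; it therefore defines a class in its algebraic de Rham cohomology. By Corollary~\ref{noreal}, the compact real locus $\bbP^{LD}(\bbR)$ is entirely contained in the complex complement $\bbP^{LD}(\bbC) \setminus Q_{(\Gamma, m)}(\bbC)$ and therefore defines a singular homology class; the orientation issue when $LD$ is even can be handled by treating $\eta_{\alpha,\epsilon}$ as a density, since it changes sign under $u \mapsto -u$ precisely when $\bbP^{LD}(\bbR)$ fails to be orientable. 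The pairing of these Betti and de Rham classes is by construction a period, and it equals the integral \eqref{intetaalpha} by absolute convergence. I expect the main technical obstacle to lie not in any of these analytic estimates but in the careful matching of $(2\pi)$-constants in part (i).
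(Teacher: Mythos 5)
Your argument follows the same route as the paper's proof: for (i), restricting $\omega$ to the affine chart $\{u_0 = 1\}$ to recover the Lebesgue volume form and matching the denominator with the product of edge propagators; for (ii), combining the no-real-points property of the $\epsilon$-deformed quadrics (Corollary~\ref{noreal}) with the superficial degree of divergence $LD - 2n\alpha$ to get convergence and the affine/projective equality, and then reading off the period interpretation. You fill in a few points the paper only asserts — the unimodularity of the incidence submatrix giving Jacobian $\pm 1$, the explicit polar-coordinate domination estimate for convergence, and the orientability/density subtlety on $\bbP^{LD}(\bbR)$ when $LD$ is even — but these are refinements of the same strategy, not a different one.
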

\proof The restriction of the differential form $\omega$ to the affine chart $\bbA^{LD}$ of coordinates $(1,u_1,\ldots,u_{LD})$ is the standard affine volume form $du_1\wedge\cdots\wedge du_{LD}$. In other words, it is the volume form $d^D\ell_1\cdots d^D\ell_L = \delta(H_\Gamma)\, d^Dk_1\, \cdots \, d^D k_n$ in the integrand of \eqref{IGamma}, with $\delta(H_\Gamma)$ the delta function. 
The denominator in the integrand of \eqref{IGamma} is the product
of the quadratic forms $q_j$ (as in $\eta_{\alpha=1}$) and the integration
in \eqref{IGamma} is over real variables. Hence, the locus of integration is the set of real 
points $\bbA^{LD}(\bbR)$ of the affine chart $\bbA^{LD}$. This shows item (i).

The presence of the exponent $\alpha$ in the denominator of \eqref{etaalpha} 
changes the superficial degree of convergence of the integral from $\delta(\Gamma) = L D - 2n$ to $\delta_\alpha(\Gamma)= DL - 2n \alpha $. When $DL -2 n\alpha <0$, the integral on the left-hand-side
of \eqref{intetaalpha} is convergent at infinity. Since we always assume that $m_i>0$ for every edge $e_i$, there are
no further divergences in the domain of integration $\bbA^{LN}(\bbR)$. The choice of the $\epsilon$-deformations 
$Q_{i,\epsilon}$ of Proposition~\ref{3quaddef} ensures that the differential form
$\eta_{\alpha,\epsilon}$ also has no poles on the hyperplane at infinity $\bbP^{LN}(\bbR)\backslash \bbA^{LN}(\bbR)=
\bbP^{LN-1}(\bbR)$; otherwise, $q_{i,\epsilon}$ would have zeros on the real locus $\bbP^{LN}(\bbR)$. This yields the equality \eqref{intetaalpha}. The proof of item (ii) follows now from the fact that \eqref{intetaalpha} is manifestly a period of $\bbP^{LD}\backslash Q_{(\Gamma,m)}$.
\endproof
Recall that the process of extraction of finite values from divergent Feynman integrals consists of
two main steps:
\begin{itemize}
\item[(i)] {\em Regularization}: the replacement of divergent Feynman integrals by 
meromorphic functions with poles at the exponents of divergence.
\item[(ii)]  {\em Renormalization}: a pole subtraction procedure on these meromorphic
functions performed consistently with the combinatorics of subgraphs and
quotient graphs (nested subdivergences).
\end{itemize}
A general procedure for carrying out these steps is provided by the
Connes--Kreimer formalism of algebraic renormalization \cite{CoKr}; consult also \cite[\S1]{CoMa}\cite[\S5]{Mar}. In our setting, regularization is
obtained by combination the $\epsilon$-deformation of quadric hypersurfaces with an Igusa zeta function. On the other hand, renormalization is as in the Connes--Kreimer setting. Consider the following Igusa zeta function
\begin{equation}\label{Igusa}
\cI(s)=\int_{\bbP^{LD}(\bbR)} \eta_{s,\epsilon} ,
\end{equation}
where the integer $\alpha\in \bbN$ has been replaced by a complex variable $s$.
\begin{proposition}\label{renorm}
The Igusa zeta function $\cI(s)$ has a Laurent series expansion
\begin{eqnarray}\label{LaurentIgusa}
 \cI(s) = \sum_{k\geq N} \gamma_k \, (s-\alpha)^k && \alpha \in \bbZ
\end{eqnarray}
for some $N\in \bbZ$, where the coefficients $\gamma_k$ are periods
of $(\bbP^{LD}\backslash Q_{(\Gamma,m)})\times \bbA^k$.
 \end{proposition}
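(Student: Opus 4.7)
The plan is to combine Igusa's theorem on meromorphic continuation of local zeta functions with an explicit expansion of the integrand in order to identify the Laurent coefficients $\gamma_k$ as periods.

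First, I would observe that, by Corollary~\ref{noreal}, the quadrics $Q_{i,\epsilon}$ have no real points, so the polynomial $f(u):=\prod_{i=1}^{n}q_{i,\epsilon}(u)$ is strictly positive on the compact real locus $\bbP^{LD}(\bbR)$. Consequently the only obstruction to convergence of $\cI(s)$ is at infinity, and the degree count of Lemma~\ref{etaFInt}(ii) yields absolute convergence in the half-plane $\mathrm{Re}(s)>LD/(2n)$. Igusa's theorem---equivalently, the Bernstein--Sato functional equation $b(s)f^s=P(s)\cdot f^{s+1}$ combined with Hironaka's embedded resolution of singularities for $f$---then extends $\cI(s)$ to a meromorphic function on the whole of $\bbC$ whose poles lie in a finite union of arithmetic progressions of negative rational numbers. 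In particular, at every $\alpha\in\bbZ$ the function $\cI(s)$ admits a Laurent expansion $\cI(s)=\sum_{k\geq N}\gamma_k(s-\alpha)^k$ for some $N\in\bbZ$.

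Next, I would compute the coefficients $\gamma_k$. Assume first that $\alpha>LD/(2n)$, so that $s=\alpha$ lies in the domain of convergence. Writing
\[
\prod_{i=1}^{n}q_{i,\epsilon}^{-s}=\prod_{i=1}^{n}q_{i,\epsilon}^{-\alpha}\cdot\exp\!\Bigl(-(s-\alpha)\sum_{i=1}^{n}\log q_{i,\epsilon}\Bigr)
\]
and differentiating under the integral (justified by positivity of $f$ on the compact real locus) produces
\[
\gamma_k=\frac{(-1)^k}{k!}\int_{\bbP^{LD}(\bbR)}\Bigl(\sum_{i=1}^{n}\log q_{i,\epsilon}\Bigr)^{k}\eta_{\alpha,\epsilon}.
\]
Representing each factor $\log q_{i,\epsilon}(u)$ as the algebraic integral $\int_{1}^{q_{i,\epsilon}(u)}\!dt/t$ introduces $k$ auxiliary variables $t_1,\ldots,t_k$ and rewrites $\gamma_k$ as the integral of an algebraic differential form over a semi-algebraic subset of $(\bbP^{LD}\setminus Q_{(\Gamma,m)})\times\bbA^k$---that is, as a period of that variety in the sense of Kontsevich--Zagier. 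For arbitrary $\alpha\in\bbZ$ I would shift into the region of convergence by iterating Bernstein--Sato finitely many times: after substituting $f^{s}=P(s+N_0-1)\cdots P(s)\cdot f^{s+N_0}/\prod_{j=0}^{N_0-1}b(s+j)$ and performing integration by parts on the affine chart, the Laurent expansion reduces, up to multiplication by an explicit rational function of $s$, to the previous case; expanding this rational pre-factor around $s=\alpha$ and collecting terms preserves the fact that each $\gamma_k$ is a period of $(\bbP^{LD}\setminus Q_{(\Gamma,m)})\times\bbA^k$.

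The main obstacle will be this last step: one must verify that the differential operators $P(s)$ produced by Bernstein--Sato introduce no new singularities outside $Q_{(\Gamma,m)}$, and book-keep carefully the powers $(\log q_{i,\epsilon})^j$ coming from the derivatives of the rational pre-factor so that the $(s-\alpha)^k$-coefficient is realised by exactly $k$ auxiliary variables. Because $f$ remains positive on the real locus throughout and all manipulations stay algebraic, this bookkeeping is delicate but manageable.
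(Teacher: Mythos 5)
Your proposal follows essentially the same route as the paper: convergence of $\cI(s)$ in the half-plane $\Re(s)>\frac{LD}{2n}$, meromorphic continuation via Bernstein's theorem, expression of the Laurent coefficients as integrals of $\eta_{\alpha,\epsilon}\cdot\log^k\!\bigl(\prod_j q_{j,\epsilon}\bigr)$, an integral representation of the logarithm to realize each coefficient as a period of $(\bbP^{LD}\setminus Q_{(\Gamma,m)})\times\bbA^k$, and an inductive shift via the functional equation to handle $\alpha\leq\frac{LD}{2n}$. The paper differs only in minor technical choices: it follows Belkale--Brosnan and writes $\log f=\int_0^1\frac{(f-1)\,dt}{(f-1)t+1}$, which produces integration over the fixed domain $[0,1]^k$ rather than your variable-limit $\int_1^{q}\frac{dt}{t}$ (the two are related by the substitution $t\mapsto 1+(f-1)t$), and it phrases the continuation as a functional equation $\cI_\Gamma(s)=\sum_i a_i(s)\,\cI(s+i)$ at the level of the zeta function with \emph{rational} coefficients $a_i(s)$, which sidesteps the bookkeeping concern you raise about the differential operators $P(s)$ possibly introducing new singularities.
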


\proof
Similarly to the proof of Lemma~\ref{etaFInt}, we observe that the integral defining $\cI(s)$ is convergent for $\Re(s)>\frac{LD}{2n}$.
By writing it as in the left-hand-side of \eqref{intetaalpha}, we can then use \cite[Cor.~4.7]{Bern} in order to extend $\cI(s)$ to a meromorphic function $\cI_\Gamma(s)$ on the entire complex plane. This extension satisfies the functional equation
\begin{equation}\label{FunEq}
\cI_\Gamma(s) = a_1(s)\, \cI(s+1) + \cdots + a_k(s)\, \cI(s+k)
\end{equation}
for some $k\in \bbN$, where the $a_i(s)$ are rational functions. For $\alpha>\frac{LD}{2n}$,
we write the Laurent expansion at $\alpha$ as follows:
$$ \cI_\Gamma(s)= \sum_{k\geq 0} \frac{(-1)^k}{k!} \int_{\bbP^{LD}(\bbR)} \eta_{\alpha,\epsilon} 
\cdot \log^k (\prod_{j=1}^n q_{j,\epsilon}) \,\,\, (s-\alpha)^k. $$
We then argue as in \cite{BeBro2}, using the identity
\begin{eqnarray*}
\log(f(u)) = \int_0^1 \theta(u,t) & \text{with} & \theta(u,t)= \frac{f(u)-1}{(f(u)-1)t+1} \, dt \,,
\end{eqnarray*}
to show that $\gamma_k$ is a period 
$$ \gamma_k = \frac{(-1)^k}{k!} \int_{\bbP^{LD}(\bbR) \times [0,1]^k} \eta_{\alpha,\epsilon} \wedge
\theta(u,t_1) \wedge \cdots \wedge \theta(u,t_k) $$
of $(\bbP^{LD}\backslash Q_{(\Gamma,m)})\times \bbA^k$. Using the
functional equation \eqref{FunEq}, it is then possible to argue inductively as in \cite{BeBro2}: the same property continues to hold for integers $\alpha\leq \frac{LD}{2n}$, using the
Laurent expansion of the terms on the right-hand-side of the functional equation to define
that of the left-hand-side.
\endproof
\begin{remark}
The coefficients $\gamma_k$ of the Laurent series expansion  \eqref{LaurentIgusa} are 
periods of $(\bbP^{2D}\backslash Q_{(\Gamma,m)})\times \bbA^k$. Therefore, thanks to $\bbA^1$-homotopy invariance, it suffices to understand the nature of the Feynman quadrics-motive $M^Q_{(\Gamma, m)}$. 
\end{remark}
We now briefly recall how the formalism of algebraic renormalization of \cite{CoKr}
can be used in order to obtain a renormalized value from the regularized Feynman
integral $\cI_\Gamma(s)$. Let $\cH_{CK}$ be the Hopf algebra of Feynman graphs. As a commutative
algebra, $\cH_{CK}$ is the commutative polynomial algebra in the connected and 1-edge connected
(1PI in the physics termonology) Feynman graphs. The coproduct $\Delta$ is non-cocommutative
and is defined by a certain sum over certain subgraphs (consult \cite[\S5.3 and \S6.2]{CoMa} for details):
$$ \Delta(\Gamma) = \Gamma\otimes 1 + 1 \otimes \Gamma + \sum_{\gamma \subset \Gamma} 
(\gamma \otimes \Gamma/\gamma)\,.$$ 
The Hopf algebra $\cH_{CK}$ is graded by the loop number $L$ (or by the number of internal
edges $n$) and is connected. The antipode is defined inductively by the formula 
$S(X)=-X+\sum S(X') X''$ for $\Delta(X)=X\otimes 1 + 1 \otimes X +\sum X'\otimes X''$
with $X'$ and $X''$ of lower degree. 
Let $\cR$ be the algebra of Laurent series centered at $s=1$ and let $\cT$ be
the projection onto its polar part. This is a Rota--Baxter operator of degree $-1$, \ie the following equality holds $\cT(f_1)\cT(f_2)=\cT(f_1 \cT(f_2))+\cT(\cT(f_1)f_2)-\cT(f_1 f_2)$. The
operator $\cT$ determines a splitting $\cR_+=(1-\cT)\cR$ and $\cR_-=\cT\cR^u$,
called the {\em unitization} of $\cT\cR$. 
Given any morphism of commutative algebras $\phi: \cH_{CK} \to \cR$,
the coproduct on $\cH$ and the Rota--Baxter operator on $\cR$ determine
a Birkhoff factorization of $\phi$ into algebra homomorphisms 
$\phi_\pm: \cH_{CK}\to \cR_\pm$. These algebra homomorphisms are determined inductively by the following formulas
\begin{eqnarray*}
 \phi_-(X) & = & -\cT(\phi(X) +\sum \phi_-(X') \phi(X'') ) \\
 \phi_+(X) & = & (1-\cT)(\phi(X) +\sum \phi_-(X') \phi(X'') )
 \end{eqnarray*}
for $\Delta(X)=X\otimes 1 + 1 \otimes X +\sum(X'\otimes X'')$ in such a way that $\phi=(\phi_-\circ S)\star \phi_+$, with 
$\phi_1\star \phi_s (X):=\langle \phi_1\otimes \phi_2,\Delta(X)\rangle$.
Given a Feynman graph $\Gamma$, the Laurent series $\phi_+(\Gamma)(s)$ 
is regular at $s=1$ and the value $\phi_+(\Gamma)(1)$ is the renormalized
value. More explicitly, we have the following equality
$$ \phi_+(\Gamma)(s) =(1-\cT)(\cI_\Gamma(s) +\sum_{\gamma\subset \Gamma}
\phi_-(\gamma)(s) \cdot \cI_{\Gamma/\gamma}(s))\,,$$
where $\phi_-$ is defined inductively as above. 
\begin{remark}
in this article, we focus solely on the leading term $(1-\cT)\cI_\Gamma(s)|_{s=1}$
of the renormalized $\phi_+(\Gamma)$, which is a period of 
$(\bbP^{2D}\backslash Q_{(\Gamma,m)})\times \bbA^1$.
\end{remark}
\section{Proof of Theorem \ref{thm:main-new}}\label{sec:proof}
By construction, the massive sunset graph $(\Gamma, m)$ has $2$ vertices and $3$ internal edges $e_1, e_2, e_3$. Following \S\ref{Qsec}, let us write $(k_1, k_2, k_3) \in \bbA^{3D}$ for the associated ``momentum variables''. Under these notations, the two linear relations \eqref{linrels} reduce to the single relation $k_1 + k_2 + k_3=0$. Therefore, $N=1$ and $n-N=2$. Let us now choose $\ell_1:=k_1$ and $\ell_2:=k_2$ as the ``loop variables''. Equivalently, let us use the variables $u=(u_0,u_1,\ldots,u_{2D})$ with $u_0=x$, $\ell_1=(\ell_{1,1},\ldots, \ell_{1,D})=(u_1,\ldots, u_D)$, and $\ell_2=(\ell_{2,1}, \ldots, \ell_{2,D})=(u_{D+1},\ldots, u_{2D})$. Under these choices, the quadric hypersurfaces $Q_1, Q_2, Q_3 \subset \bbP^{2D}$ of Lemma \ref{FQlem} can be written as follows: 
\begin{equation}\label{sunQim}
\begin{array}{lcl}
 Q_1= \{ q_1(u) =\langle u, A_1u \rangle =0 \}& \text{with} & A_1={\rm diag}(m_1^2,\underbrace{1,\ldots,1}_{D},\underbrace{0,\ldots,0}_{D}) \\[2mm]
 Q_2= \{ q_2(u)=\langle u, A_2 u \rangle =0 \} & \text{with} & A_2={\rm diag}(m_2^2,\underbrace{0,\ldots,0}_{D},
\underbrace{1,\ldots,1}_{D}) \\[2mm]
 Q_3= \{ q_3(u)= \langle u, A_3 u\rangle =0 \}& \text{with} & A_3={\rm diag}(m_3^2, \underbrace{1,\ldots,1}_{D}, \underbrace{1,\ldots,1}_{D})\,.
\end{array}
\end{equation} 
Let us write $Q_{1, \epsilon}, Q_{2, \epsilon}, Q_{3, \epsilon} \subset \bbP^{2D}$ for the associated $\epsilon$-deformations of Proposition \ref{3quaddef}. An explicit choice for these $\epsilon$-deformations is the following
\begin{equation*}\label{3qepsilon}
 \begin{array}{lcl}
Q_{1, \epsilon} = \{q_{1,\epsilon}(u) =\langle u, A_{1,\epsilon} u \rangle = 0\} & \text{with} & 
A_{1,\epsilon}={\rm diag}(m_1^2,\underbrace{1,\ldots,1}_{D},\underbrace{\epsilon^2,\ldots,\epsilon^{2D}}_{D}) \\[2mm]
Q_{2, \epsilon} = \{q_{2,\epsilon}(u)=\langle u, A_{2,\epsilon} u\rangle =0\} & \text{with} & 
A_{2,\epsilon}={\rm diag}(m_2^2,\underbrace{\epsilon^2,\ldots,\epsilon^{2D}}_{D},\underbrace{1,\ldots,1}_{D}) \\[2mm]
Q_{3, \epsilon} = \{q_{3,\epsilon}(u)=\langle u, A_{3,\epsilon}u \rangle=0\}\,, & &
\end{array}
\end{equation*}
where $A_{3,\epsilon}$ stands for the following diagonal matrix:
$$ A_{3,\epsilon}=
{\rm diag}(m_3^2,  \underbrace{(1+\epsilon)^2,\ldots,(1+\epsilon^D)^2}_{D}, \underbrace{(1+\epsilon)^2,\ldots,(1+\epsilon^D)^2}_{D})\,. $$

\begin{proposition}\label{prop:2}
There exists a Zariski open subset $W(m)\subset \bbA^1$ (which depends on the mass parameter $m=(m_i)$) such that for every $\epsilon \in W(m)$ the above $\epsilon$-deformations $Q_{1, \epsilon}, Q_{2, \epsilon}, Q_{3, \epsilon} \subset \bbP^{2D}$  have not only the properties (i)-(iv) of Proposition \ref{3quaddef} but also the following additional properties:
\begin{itemize}
\item[(v)] The intersection $Q_{i,\epsilon}\cap Q_{j,\epsilon}$, with $i \neq j \in \{1, 2, 3\}$, is a complete intersection.
\item[(vi)] The intersection $Q_{1,\epsilon}\cap Q_{2,\epsilon} \cap Q_{3,\epsilon}$ is
a complete intersection. 
\end{itemize}
\end{proposition}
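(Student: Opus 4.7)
The plan splits into three pieces: confirm (i)--(iv) for the explicit $\epsilon$-deformations, establish the pairwise complete intersection (v), and then the triple complete intersection (vi). Each condition will cut out a Zariski open subset of $\bbA^1$, and $W(m)$ will be their intersection with the punctured line $\bbA^1\setminus\{0\}$ (the origin must be excluded because the undeformed quadrics are singular).

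For (i)--(iv) it suffices to observe that every diagonal entry of $A_{i,\epsilon}$ is a rational polynomial in $\epsilon$ that is strictly positive for all sufficiently small real $\epsilon \neq 0$: the $m_i^2$ entries are positive by assumption, the $\epsilon^{2j}$ entries are squares, and the $(1+\epsilon^j)^2$ entries are squares close to $1$. Positivity yields smoothness (i) and reality (ii), and gives the decomposition $A_{i,\epsilon} = T_{i,\epsilon}^\dagger T_{i,\epsilon}$ with $T_{i,\epsilon}$ the diagonal positive square root (iii). For momentum conservation (iv), orient the sunset graph so that $e_1, e_2$ go from one vertex to the other and $e_3$ in the reverse direction; an entrywise comparison then shows $\bar T_{1,\epsilon} + \bar T_{2,\epsilon} = \bar T_{3,\epsilon}$, since the $j$-th diagonal entry $1+\epsilon^j$ of $\bar T_{3,\epsilon}$ is matched on each block. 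For (v), each $Q_{i,\epsilon}$ is smooth of dimension $2D-1 \geq 3$ (using $D \geq 2$), hence irreducible, and the three defining matrices are pairwise distinct; thus $q_{j,\epsilon}$ restricts nontrivially to the irreducible $Q_{i,\epsilon}$, and $Q_{i,\epsilon}\cap Q_{j,\epsilon}$ has codimension $2$ in $\bbP^{2D}$.

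The main obstacle is (vi). The key identity, obtained by expanding $(1+\epsilon^j)^2 = 1+2\epsilon^j+\epsilon^{2j}$, reads
\[ q_{3,\epsilon} - q_{1,\epsilon} - q_{2,\epsilon} \;=\; (m_3^2 - m_1^2 - m_2^2)\, u_0^2 \;+\; 2\sum_{j=1}^{D} \epsilon^j\,(u_j^2 + u_{D+j}^2); \]
denote the right-hand side by $q'_\epsilon$. Then $(q_{1,\epsilon},q_{2,\epsilon},q_{3,\epsilon}) = (q_{1,\epsilon},q_{2,\epsilon},q'_\epsilon)$ as ideals, so the triple intersection is a codimension-$3$ complete intersection in $\bbP^{2D}$ iff these three quadratic forms are a regular sequence in $F[u_0,\ldots,u_{2D}]$. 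At $\epsilon=0$, $q'_0 = (m_3^2-m_1^2-m_2^2)\,u_0^2$ is nonzero by the standing hypothesis $m_3^2\neq m_1^2+m_2^2$; the block-disjoint form of $q_{1,0}$ and $q_{2,0}$ makes it immediate that $V(q_{1,0},q_{2,0},u_0)$ has codimension $3$, and by Cohen--Macaulayness of the polynomial ring this upgrades to a regular sequence for $(q_{1,0},q_{2,0},u_0^2)$. Since the condition ``codimension equals number of generators'' is Zariski open in one-parameter families, the complete intersection property persists on an open neighborhood of $\epsilon=0$. Intersecting all the open conditions and removing $\epsilon=0$ produces the desired $W(m)$. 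The delicate point is that one cannot deduce (vi) from a naive dimension count; one must pass through the explicit linear combination $q'_\epsilon$, which is precisely where the numerical hypothesis $m_3^2 \neq m_1^2+m_2^2$ enters the proof.
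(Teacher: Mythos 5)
Your argument is correct, but it takes a genuinely different route from the paper's. The paper proves (v) and (vi) by transversality: it computes the three gradient vectors $\nabla q_{i,\epsilon}(u)$, observes that any point on a quadric has at least two nonzero coordinates, and reduces linear independence of the gradients at every relevant point to the nonvanishing of an explicit list of $2\times 2$ (for (v)) and $3\times 3$ (for (vi)) determinants of entries of the diagonal matrices $A_{i,\epsilon}$; each determinant is a polynomial in $\epsilon$ and $W(m)$ is the complement of their zero loci. It then invokes the fact (Harris, Prop.~17.18) that a transverse intersection of hypersurfaces in $\bbP^s$ is a complete intersection. You instead handle (v) by irreducibility of a smooth quadric together with Krull's principal ideal theorem, and (vi) by passing to the ideal generated by $(q_{1,\epsilon},q_{2,\epsilon},q'_\epsilon)$ with $q'_\epsilon = q_{3,\epsilon}-q_{1,\epsilon}-q_{2,\epsilon}$, degenerating at $\epsilon=0$, and invoking upper semicontinuity of fiber dimension in the proper family over $\bbA^1$. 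Both strategies are sound; yours is arguably cleaner in that it avoids the long list of minors and the (unverified in the paper) claim that each of those determinants is not identically zero as a polynomial in $\epsilon$.

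One point deserves a caveat. Your proof of (vi) genuinely uses the hypothesis $m_3^2 \neq m_1^2 + m_2^2$: it guarantees $q'_0 = (m_3^2-m_1^2-m_2^2)u_0^2 \neq 0$, which is what makes the special fiber at $\epsilon=0$ have the expected codimension and allows the semicontinuity argument to bite. However, Proposition~\ref{prop:2} as stated does not carry this hypothesis --- it appears only in Theorem~\ref{thm:main-new}. The paper's transversality argument produces a nonempty $W(m)$ for every choice of positive masses, including the case $m_3^2 = m_1^2 + m_2^2$ (one can check, e.g., that the $\epsilon$-linear coefficient of the first $3\times 3$ determinant is $-2m_1^2 \neq 0$, so it is not the zero polynomial even when the constant term $m_3^2-m_1^2-m_2^2$ vanishes). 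So strictly speaking you have proved a slightly weaker statement. Since the proposition is only invoked in the proof of Theorem~\ref{thm:main-new}, which assumes $m_3^2\neq m_1^2+m_2^2$ and $D\geq 2$ anyway, this discrepancy is harmless for the paper's purposes, but it is worth noting that your route, unlike the paper's, does not yield the proposition in the form it is actually stated. If you wanted to close the gap, one would need to rescue the degeneration in the exceptional case $m_3^2=m_1^2+m_2^2$, e.g.\ by dividing $q'_\epsilon$ by $\epsilon$ before specializing.
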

\begin{proof}
As explained in \cite[Prop.~17.18]{Harris}, an intersection of $r$ transversally intersecting hypersurfaces in $\bbP^s$, with $r<s$, is always a complete intersection. Therefore, the proof will consist on showing that the quadric hypersurfaces 
$Q_{1,\epsilon}, Q_{2,\epsilon}, Q_{3,\epsilon} \subset \bbP^{2D}$ intersect transversely.
This condition can be checked in an affine chart. Concretely, the tangent space at a point $\hat u$ of the quadric hypersurface $Q_{i,\epsilon}=\{q_{i,\epsilon}(u)=0\}$ is defined by the equation $\langle \nabla q_{i,\epsilon}(\hat u), (u-\hat u)\rangle =0$ (see Notation \ref{not:pairing}), where $\nabla q_{i,\epsilon}$ stands for the gradient vector. 
The gradient vectors $\nabla q_{1,\epsilon}(u)$, $\nabla q_{2,\epsilon}(u)$, and $\nabla q_{3,\epsilon}(u)$, are given, respectively, by the following expressions: 
$$ \begin{array}{l} 
(2 m_1^2 u_0, 2(u_1,\ldots, u_D), 2(\epsilon^2 u_{D+1}, \ldots, \epsilon^{2D} u_{2D})) \\[2mm]
(2 m_2^2 u_0, 2(\epsilon^2 u_1, \ldots, \epsilon^{2D} u_D), 2(u_{D+1},\ldots, u_{2D}) \\[2mm]
(2 m_3^2 u_0, 2((1+\epsilon)^2 u_1, \ldots, (1+\epsilon^D)^2 u_D), 2((1+\epsilon)^2 u_{D+1}, \ldots, 
(1+\epsilon^D)^2 u_{2D})\, . \end{array} $$
Hence, in order to prove item (v) it suffices to show that for every point $u \in \bigcup_{i\neq j} (Q_{i,\epsilon}\cap Q_{j,\epsilon})\subset  \bbP^{2D}$ any two of the three gradient vectors are linearly independent. Note that the points of $\bbP^{LD}$ which lie in at least one of the quadric hypersurfaces $Q_{i,\epsilon}$ have at least two nonzero coordinates $u_i$ (if all but one of the $u_i$ are zero,
then by the equation $q_{i,\epsilon}(u)=0$ the last coordinate must also be zero,
which would not be a point in projective space). Thus, it is enough to check
that at all points of $\bbP^{2D}$ with at least two non-zero coordinates, the
vectors are linearly independent. This is equivalent to checking that the following $2\times 2$ matrices have non-zero determinant:
$$ \begin{pmatrix} 1 & m_1^2 \\ \epsilon^{2j} & m_2^2 \end{pmatrix} \quad  \begin{pmatrix}
\epsilon^{2k} & m_1^2 \\ 1 & m_2^2\end{pmatrix}$$
$$\begin{pmatrix}
1 & \epsilon^{2k} \\ \epsilon^{2j} & 1 \end{pmatrix} \quad j \neq k
$$
$$  
\begin{pmatrix}
1 & 1 \\ \epsilon^{2j} & \epsilon^{2k} \end{pmatrix} \quad \begin{pmatrix} \epsilon^{2j} & \epsilon^{2k} \\
1 & 1 \end{pmatrix}$$
$$
 \begin{pmatrix} 
1 & m_1^2 \\ (1+\epsilon^j)^2 & m_3^2 \end{pmatrix} \quad  \begin{pmatrix} 
1 & m_2^2 \\ (1+\epsilon^j)^2 & m_3^2 \end{pmatrix} \quad 
\begin{pmatrix} \epsilon^{2j} 
& m_1^2 \\ (1+\epsilon^j)^2 & m_3^2 \end{pmatrix} \quad \begin{pmatrix} \epsilon^{2j} 
& m_2^2 \\ (1+\epsilon^j)^2 & m_3^2 \end{pmatrix}
$$
$$
\begin{pmatrix} 1 & \epsilon^{2k} 
 \\ (1+\epsilon^j)^2 & (1+\epsilon^k)^2 \end{pmatrix}\quad j \neq k $$
 $$
 \begin{pmatrix} \epsilon^{2j} & \epsilon^{2k} 
 \\ (1+\epsilon^j)^2 & (1+\epsilon^k)^2 \end{pmatrix}\quad  \begin{pmatrix} 1 & 1
 \\ (1+\epsilon^j)^2 & (1+\epsilon^k)^2 \end{pmatrix}  \,.
$$ 
The locus where at least one of these determinants is equal to zero defines a polynomial equation
in $\epsilon$ that depends on the value of the masses $m=(m_i)$. Thus, the set of $\epsilon$'s 
where all the determinants are nonzero is the complement of the solutions of these polynomial
equations, hence a Zariski open set $U=U(m)$. The intersection of this open set $U(m)$ with the open set
of sufficiently small $\epsilon$'s for which conditions (i)-(iv) of Proposition \ref{3quaddef} hold is also a Zariski open set.

In the same vein, in order to prove (vi), it suffices to show that at every point $u \in Q_{1,\epsilon}\cap Q_{2,\epsilon}\cap Q_{3,\epsilon}\subset  \bbP^{2D}$ the three gradient vectors are linearly independent. This means that, at every such point, at least one $3\times 3$-minor of the matrix
formed by the three gradient vectors is non-zero. It is therefore sufficient to show that the 
following $3\times 3$ matrices have non-zero determinant:
 $$ \begin{pmatrix} 1 & \epsilon^{2k} & m_1^2 \\ \epsilon^{2j} & 1 & m_2^2 \\
(1+\epsilon^j)^2 & (1+\epsilon^k)^2 & m_3^2 
\end{pmatrix} $$
$$ \begin{pmatrix} 1 & 1 & m_1^2 \\ \epsilon^{2j} & \epsilon^{2k} & m_2^2 \\
(1+\epsilon^j)^2 & (1+\epsilon^k)^2 & m_3^2 
\end{pmatrix} $$
$$  \begin{pmatrix} \epsilon^{2j} & \epsilon^{2k} & m_1^2 \\  1 & 1 & m_2^2 \\
(1+\epsilon^j)^2 & (1+\epsilon^k)^2 & m_3^2 
\end{pmatrix} $$
$$
\begin{pmatrix} \epsilon^{2i} & \epsilon^{2j} & \epsilon^{2k}  \\  1 & 1 & 1 \\
(1+\epsilon^i)^2 & (1+\epsilon^j)^2 & (1+\epsilon^k)^2 
\end{pmatrix}\quad 
\begin{pmatrix}   1 & 1 & 1 \\ \epsilon^{2i} & \epsilon^{2j} & \epsilon^{2k}  \\
(1+\epsilon^i)^2 & (1+\epsilon^j)^2 & (1+\epsilon^k)^2 
\end{pmatrix} $$
$$ 
\begin{pmatrix}   1 & 1 & \epsilon^{2k}  \\ \epsilon^{2i} & \epsilon^{2j} & 1  \\
(1+\epsilon^i)^2 & (1+\epsilon^j)^2 & (1+\epsilon^k)^2 
\end{pmatrix} \quad
\begin{pmatrix}   1 & \epsilon^{2j} & \epsilon^{2k}  \\ \epsilon^{2i} & 1 & 1  \\
(1+\epsilon^i)^2 & (1+\epsilon^j)^2 & (1+\epsilon^k)^2 
\end{pmatrix}\,.
$$
Once again, for each choice of the masses $m=(m_i)$, there exists a Zariski open set $V=V(m)$
on which all these determinants are non-zero. The intersection of the open sets
$U(m)$ and $V(m)$ with the open set of sufficiently small $\epsilon$'s for which
conditions (i)-(iv) of Proposition \ref{3quaddef} hold is also a Zariski open set $W(m)$ (which depends on the mass parameter $m=(m_i)$). This concludes the proof.
\end{proof}

\begin{notation}\label{not:key}
\begin{itemize}
\item[(i)] Let $X$ be a $F$-scheme of finite type. Following Voevodsky \cite{Voev}, we will write $M(X)_\bbQ$, resp. $M^c(X)_\bbQ$, for the mixed motive, resp. mixed motive with compact support, associated to $X$. Recall from {\em loc. cit.} that whenever $X$ is proper, we have a canonical isomorphism $M^c(X)_\bbQ \simeq M(X)_\bbQ$.
\item[(ii)] Let $\mathrm{Chow}(F)_\bbQ$ be the Grothendieck's (additive) category of Chow motives; see \cite[\S4]{Andre}. Given a smooth projective $k$-scheme $X$, we will write $\mathfrak{h}(X)_\bbQ$ for the associated Chow motive; when $X=\mathrm{Spec}(k)$, we will write ${\bf 1}_\bbQ$ instead.
\end{itemize}
\end{notation}
\begin{remark}\label{rk:key}
As proved by Voevodsky in \cite[Prop.~2.1.4]{Voev} (consult also \cite[\S18.3]{Andre}), there exists a fully-faithful (contravariant) functor $\Phi\colon \mathrm{Chow}(F)_\bbQ \to \mathrm{DM}_{\mathrm{gm}}(F)_\bbQ$ such that $\Phi(\mathfrak{h}(X)_\bbQ)\simeq M(X)_\bbQ$ for every smooth projective $F$-scheme $X$. Moreover, the functor $\Phi$ sends the Lefschetz motive $\bbL$ to the Tate motive $\bbQ(1)[2]$.
\end{remark}

\subsection*{Proof of item (i)}
\begin{lemma}\label{lem:compact}
For every smooth $F$-scheme $X$, the motive $M(X)_\bbQ$ is mixed-Tate if and only if the motive $M^c(X)_\bbQ$ is mixed-Tate.
\end{lemma}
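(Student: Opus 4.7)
The plan is to deduce this equivalence directly from Poincar\'e duality in Voevodsky's triangulated category. Since the base field $F\subseteq \bbC$ has characteristic zero, resolution of singularities is available and the category $\mathrm{DM}_{\mathrm{gm}}(F)_\bbQ$ is rigid symmetric monoidal, i.e., every geometric motive admits a strong dual. Moreover, for every smooth $F$-scheme $X$ of pure dimension $d$, one has the Poincar\'e duality isomorphism
$$
M^c(X)_\bbQ \;\simeq\; M(X)_\bbQ^{\vee}(d)[2d],
$$
and hence, by taking duals on both sides, also $M(X)_\bbQ \simeq M^c(X)_\bbQ^{\vee}(d)[2d]$. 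For a general smooth $F$-scheme, we decompose it into connected components (each smooth and of pure dimension) and apply the formula summandwise; note that $M(-)_\bbQ$ and $M^c(-)_\bbQ$ both convert a disjoint union into a direct sum.

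The first step is to recall that the full triangulated subcategory of mixed-Tate motives inside $\mathrm{DM}_{\mathrm{gm}}(F)_\bbQ$ is generated by the Tate motives $\bbQ(n)[m]$, and is therefore closed under shifts, direct sums, summands, cones, Tate twists, \emph{and} duality, the latter because $\bbQ(n)[m]^{\vee}\simeq \bbQ(-n)[-m]$ is again Tate. The second step is then purely formal: $M(X)_\bbQ$ is mixed-Tate if and only if $M(X)_\bbQ^{\vee}$ is mixed-Tate, if and only if the Tate twist-shift $M(X)_\bbQ^{\vee}(d)[2d]\simeq M^c(X)_\bbQ$ is mixed-Tate, and vice versa. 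The only real ``obstacle'' is pinning down the precise form of Poincar\'e duality for smooth (non-proper) schemes in $\mathrm{DM}_{\mathrm{gm}}(F)_\bbQ$, which is a classical result of Voevodsky valid in our characteristic zero setting; the rest of the argument is a formal manipulation within the rigid tensor structure.
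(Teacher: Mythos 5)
Your proposal is correct and takes essentially the same route as the paper: both reduce to the equidimensional case, invoke Voevodsky's Poincar\'e duality $M^c(X)_\bbQ \simeq M(X)_\bbQ^{\vee}(d)[2d]$ (cited in the paper as \cite[Thm.~4.3.7]{Voev}), and conclude from the stability of mixed-Tate motives under duals and Tate twists. Your version merely spells out a couple of details (rigidity in characteristic zero, reduction to connected components) that the paper leaves implicit.
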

\begin{proof}
Without loss of generality, we can assume that $X$ is equidimensional; let $d$ be its dimension. As proved by Voevodsky in \cite[Thm.~4.3.7]{Voev}, the dual $M(X)^\vee$ of $M(X)$ is isomorphic to $M^c(X)_\bbQ(-d)[-2d]$. Hence, the proof follows from the fact that the category of mixed-Tate motives is stable under duals and Tate-twists.
\end{proof}
Recall from Definition \ref{FQmotdef} that the Feynman quadrics-motive $M^Q_{(\Gamma, m)}$ is defined as $M(\bbP^{2D}\backslash Q_{(\Gamma, m)})_\bbQ \in \mathrm{DM}_{\mathrm{gm}}(F)_\bbQ$, where $Q_{(\Gamma, m)}=Q_{1, \epsilon}\cap Q_{2, \epsilon} \cap Q_{3, \epsilon}$. Since $\bbP^{2D}\backslash Q_{(\Gamma, m)}$ is smooth, we hence conclude from Lemma \ref{lem:compact} that
\begin{equation}\label{equivalence0}
M^Q_{(\Gamma,m)}\,\,\text{mixed}\text{-}\text{Tate} \Leftrightarrow M^c(\bbP^{2D}\backslash Q_{(\Gamma, m)})_\bbQ\,\,\text{mixed}\text{-}\text{Tate}\,.
\end{equation}
Let $X$ be a $F$-scheme of finite type. Recall from \cite[Ex.~16.18]{Maz} that given a Zariski open cover $X=U \cup V$, we have an induced Mayer-Vietoris distinguished triangle:
\begin{equation}\label{eq:triangle-MV}
M^c(X)_\bbQ \too M^c(U)_\bbQ \oplus M^c(V)_\bbQ \too M^c(U \cap V)_\bbQ \too M^c(X)_\bbQ[1]\,. 
\end{equation}
In the same vein, given a Zariski closed subscheme $Z\subset X$ with open complement $U$, recall from \cite[Prop.~4.1.5]{Voev} that we have an induced Gysin distinguished triangle:
\begin{equation}\label{eq:triangle-Gysin}
M^c(Z)_\bbQ \too M^c(X)_\bbQ \too M^c(U)_\bbQ \too M^c(Z)_\bbQ[1]\,.
\end{equation}
\begin{lemma}\label{lem:new}
The motive $M^c(\bbP^{2D}\backslash Q_{i, \epsilon})_\bbQ$ is mixed-Tate for every $i \in \{1,2,3\}$.
\end{lemma}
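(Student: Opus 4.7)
The plan is to apply the Gysin distinguished triangle \eqref{eq:triangle-Gysin} with $X=\bbP^{2D}$, closed subscheme the smooth quadric $Q_{i,\epsilon}$ (smoothness is guaranteed by Proposition \ref{3quaddef}(i)), and open complement $\bbP^{2D}\setminus Q_{i,\epsilon}$. This yields a distinguished triangle
$$M^c(Q_{i,\epsilon})_\bbQ \too M^c(\bbP^{2D})_\bbQ \too M^c(\bbP^{2D}\setminus Q_{i,\epsilon})_\bbQ \too M^c(Q_{i,\epsilon})_\bbQ[1].$$
Since the class of mixed-Tate motives in $\mathrm{DM}_{\mathrm{gm}}(F)_\bbQ$ is thick (stable under cones and summands), it suffices to show that both $M^c(\bbP^{2D})_\bbQ$ and $M^c(Q_{i,\epsilon})_\bbQ$ are mixed-Tate; the result will follow by two-out-of-three.

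The first term is immediate: $\bbP^{2D}$ is smooth and proper, so by Notation~\ref{not:key}(i) one has $M^c(\bbP^{2D})_\bbQ\simeq M(\bbP^{2D})_\bbQ \simeq \bigoplus_{j=0}^{2D} \bbQ(j)[2j]$, which is manifestly mixed-Tate.

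For the second term, note that $Q_{i,\epsilon}$ is smooth and projective (a smooth hypersurface in $\bbP^{2D}$), so again $M^c(Q_{i,\epsilon})_\bbQ\simeq M(Q_{i,\epsilon})_\bbQ$. Thus it suffices to show that the Chow motive $\mathfrak{h}(Q_{i,\epsilon})_\bbQ$ is a direct sum of Tate twists of ${\bf 1}_\bbQ$, and then apply the embedding $\Phi$ of Remark~\ref{rk:key}. Here I would invoke the classical motivic decomposition of a smooth quadric hypersurface: since $F$ is algebraically closed, $Q_{i,\epsilon}$ has an $F$-rational point, and the odd-dimensional smooth quadric $Q_{i,\epsilon}\subset\bbP^{2D}$ (of dimension $2D-1$) admits the decomposition
$$\mathfrak{h}(Q_{i,\epsilon})_\bbQ \simeq \bigoplus_{j=0}^{2D-1} \bbL^{\otimes j}$$
in $\mathrm{Chow}(F)_\bbQ$ (see, e.g., Rost's nilpotence / Karpenko's motivic decomposition of quadrics, or directly the computation from the cellular structure of a split quadric).

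The main (and essentially only) non-routine step is citing the correct reference for the motivic decomposition of a smooth quadric over an algebraically closed field; once that is accepted, the triangle above immediately gives that $M^c(\bbP^{2D}\setminus Q_{i,\epsilon})_\bbQ$ is mixed-Tate, completing the lemma.
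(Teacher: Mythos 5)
Your proof is correct and follows essentially the same route as the paper: both use the Gysin triangle for $Q_{i,\epsilon}\hookrightarrow\bbP^{2D}$ together with the motivic decomposition of the odd-dimensional smooth quadric $Q_{i,\epsilon}$ into Lefschetz twists (the paper cites \cite[Rk.~2.1]{Bro}, but any standard reference for split quadrics suffices). The only cosmetic difference is that you spell out the two-out-of-three argument explicitly.
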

\begin{proof}
Since $2D$ is even, the hypersurface quadric $Q_{i, \epsilon} \subset \bbP^{2D}$ is odd-dimensional. Consequently, we have the following motivic decomposition (see \cite[Rk.~2.1]{Bro}):
\begin{equation}\label{eq:decomp-quadric}
\mathfrak{h}(Q_{i, \epsilon})_\bbQ \simeq {\bf 1}_\bbQ \oplus \bbL \oplus \bbL^{\otimes 2} \oplus \cdots \oplus \bbL^{\otimes (2D-1)} \,.
\end{equation}
This implies that the motive $M^c(Q_{i, \epsilon})_\bbQ \simeq M(Q_{i, \epsilon})_\bbQ$ is mixed-Tate; see Remark \ref{rk:key}. Using the fact that $M^c(\bbP^{2D})_\bbQ \simeq M(\bbP^{2D})_\bbQ$ is mixed-Tate, we hence conclude from the general Gysin triangle \eqref{eq:triangle-Gysin} (with $X:=\bbP^{2D}$ and $Z:=Q_{i, \epsilon}$) that the motive $M^c(\bbP^{2D}\backslash Q_{i, \epsilon})_\bbQ$ is also mixed-Tate. 
\end{proof}
\begin{proposition}\label{prop:1}
Assume that the motive $M^c(\bbP^{2D}\backslash (Q_{i, \epsilon} \cap Q_{j, \epsilon}))_\bbQ$ is mixed-Tate for every $i \neq j \in \{1,2,3\}$. Under this assumption, the motive $M^c(\bbP^{2D}\backslash Q_{(\Gamma, m)})_\bbQ$ is mixed-Tate if and only if the motive $M^c(\bbP^{2D}\backslash (Q_{1, \epsilon}\cap Q_{2, \epsilon} \cap Q_{3, \epsilon}))_\bbQ$ is mixed-Tate. 
\end{proposition}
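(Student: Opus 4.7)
The plan is to apply the Mayer--Vietoris distinguished triangle \eqref{eq:triangle-MV} iteratively, together with the fact that the full subcategory of mixed-Tate motives is triangulated and closed under direct summands, so that in any distinguished triangle where two of the three vertices are mixed-Tate, the third is as well. The combinatorial input is simply De Morgan duality: setting $U_i := \bbP^{2D}\setminus Q_{i,\epsilon}$ for $i\in\{1,2,3\}$, one has
\[
U_1\cap U_2\cap U_3 \,=\, \bbP^{2D}\setminus Q_{(\Gamma,m)}, \qquad U_1\cup U_2\cup U_3 \,=\, \bbP^{2D}\setminus (Q_{1,\epsilon}\cap Q_{2,\epsilon}\cap Q_{3,\epsilon}),
\]
and, for $i\neq j$, $U_i\cap U_j=\bbP^{2D}\setminus(Q_{i,\epsilon}\cup Q_{j,\epsilon})$ while $U_i\cup U_j=\bbP^{2D}\setminus(Q_{i,\epsilon}\cap Q_{j,\epsilon})$.

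The first step is a bootstrap from the hypothesis on pairwise intersections to the complement of pairwise unions. For each pair $i\neq j$, apply \eqref{eq:triangle-MV} to the Zariski cover $U_i\cup U_j=\bbP^{2D}\setminus(Q_{i,\epsilon}\cap Q_{j,\epsilon})$: the first vertex is mixed-Tate by hypothesis, the middle vertex $M^c(U_i)_\bbQ\oplus M^c(U_j)_\bbQ$ by Lemma~\ref{lem:new}, and therefore the third vertex $M^c(\bbP^{2D}\setminus(Q_{i,\epsilon}\cup Q_{j,\epsilon}))_\bbQ$ is mixed-Tate as well.

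The second step is a two-fold Mayer--Vietoris unpacking $\bbP^{2D}\setminus(Q_{1,\epsilon}\cap Q_{2,\epsilon}\cap Q_{3,\epsilon})=(U_1\cup U_2)\cup U_3$. In this triangle $M^c(U_1\cup U_2)_\bbQ$ is mixed-Tate by hypothesis and $M^c(U_3)_\bbQ$ by Lemma~\ref{lem:new}, so $M^c(\bbP^{2D}\setminus(Q_{1,\epsilon}\cap Q_{2,\epsilon}\cap Q_{3,\epsilon}))_\bbQ$ is mixed-Tate if and only if $M^c((U_1\cup U_2)\cap U_3)_\bbQ$ is. Rewriting $(U_1\cup U_2)\cap U_3=(U_1\cap U_3)\cup(U_2\cap U_3)$ and applying \eqref{eq:triangle-MV} one last time, the two middle terms $M^c(U_1\cap U_3)_\bbQ$ and $M^c(U_2\cap U_3)_\bbQ$ are mixed-Tate by the first step, so $M^c((U_1\cup U_2)\cap U_3)_\bbQ$ is mixed-Tate if and only if $M^c(U_1\cap U_2\cap U_3)_\bbQ=M^c(\bbP^{2D}\setminus Q_{(\Gamma,m)})_\bbQ$ is.

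Chaining these two equivalences yields the claim. The argument is essentially a formal Mayer--Vietoris/inclusion--exclusion manipulation: no smoothness of intersections is needed since everything is phrased with motives with compact support, and the only real point to track is the De Morgan correspondence between covers of the triple complement and complements of unions/intersections of the $Q_{i,\epsilon}$.
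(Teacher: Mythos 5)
Your proof is correct and takes essentially the same route as the paper's: both are iterated Mayer--Vietoris arguments in which every auxiliary open set that appears is controlled either by Lemma~\ref{lem:new} or by the hypothesis on complements of pairwise intersections, and your first "bootstrap" step simply re-derives (one direction of) Lemma~\ref{lem:1}. The only cosmetic difference is the order of decomposition — you peel off $U_3$ from $U_1\cup U_2\cup U_3$ and then split $(U_1\cup U_2)\cap U_3$, whereas the paper covers $(U_1\cap U_2)\cup U_3$ and $U_1\cup U_2\cup U_3$ by the opposite pairs — but the combinatorics and the ingredients are identical.
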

\begin{proof}
Let $U:=\bbP^{2D}\backslash (Q_{1, \epsilon} \cup Q_{2, \epsilon})$ and $V:=\bbP^{2D}\backslash Q_{3, \epsilon}$. Note that $U \cap V = \bbP^{2D}\backslash Q_{(\Gamma, m)}$. Thanks to Lemma \ref{lem:new}, the motive $M^c(V)_\bbQ$ is mixed-Tate. Moreover, since by assumption the motive $M^c(\bbP^{2D}\backslash (Q_{1, \epsilon} \cap Q_{2, \epsilon}))_\bbQ$ is mixed-Tate, Lemma \ref{lem:1} below implies that the motive $M^c(U)_\bbQ$ is also mixed-Tate. Therefore, we conclude from the general Mayer-Vietoris triangle \eqref{eq:triangle-MV} (with $X:=U \cup V$) that
\begin{equation}\label{equivalence1}
M^c(\bbP^{2D}\backslash Q_{(\Gamma, m)})_\bbQ\,\,\text{mixed}\text{-}\text{Tate} \Leftrightarrow M^c(U \cup V)_\bbQ\,\,\text{mixed}\text{-}\text{Tate}\,.
\end{equation}
Now, let $U_{13}:=\bbP^{2D}\backslash (Q_{1, \epsilon} \cap Q_{3, \epsilon})$ and $U_{23}:=\bbP^{2D}\backslash (Q_{2, \epsilon} \cap Q_{3, \epsilon})$. Note that
$$ U_{13}\cap U_{23}= \bbP^{2D}\backslash ((Q_{1, \epsilon} \cap Q_{3, \epsilon}) \cup (Q_{2, \epsilon} \cap Q_{3, \epsilon}))= \bbP^{2D}\backslash ((Q_{1, \epsilon} \cup Q_{2, \epsilon})\cap Q_{3, \epsilon})= U\cup V$$
and that $U_{13}\cup U_{23}= \bbP^{2D}\backslash (Q_{1, \epsilon} \cap Q_{2, \epsilon} \cap Q_{3, \epsilon})$. Therefore, since by assumption the motives $M^c(\bbP^{2D}\backslash (Q_{1, \epsilon} \cap Q_{3, \epsilon}))_\bbQ$ and $M^c(\bbP^{2D}\backslash (Q_{2, \epsilon} \cap Q_{3, \epsilon}))_\bbQ$ are mixed-Tate, we conclude from the general Mayer-Vietoris triangle \eqref{eq:triangle-MV} (with $X:=U_{13} \cup U_{23}$) that
\begin{equation}\label{equivalence2}
M^c(U\cup V)_\bbQ\,\,\text{mixed}\text{-}\text{Tate} \Leftrightarrow M^c(\bbP^{2D}\backslash (Q_{1, \epsilon} \cap Q_{2, \epsilon} \cap Q_{3, \epsilon}))_\bbQ\,\,\text{mixed}\text{-}\text{Tate}\,.
\end{equation}
The proof follows now from the combination of \eqref{equivalence1} with \eqref{equivalence2}.
\end{proof}

\begin{lemma}\label{lem:1}
The motive $M^c(\bbP^{2D}\backslash (Q_{i, \epsilon} \cap Q_{j, \epsilon}))_\bbQ$ is mixed-Tate if and only if the motive $M^c(\bbP^{2D}\backslash (Q_{i, \epsilon} \cup Q_{j, \epsilon}))_\bbQ$ is mixed-Tate.
\end{lemma}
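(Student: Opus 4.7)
The plan is to apply the Mayer--Vietoris distinguished triangle \eqref{eq:triangle-MV} to an appropriately chosen open cover. Set $U_i:=\bbP^{2D}\backslash Q_{i,\epsilon}$ and $U_j:=\bbP^{2D}\backslash Q_{j,\epsilon}$. A direct set-theoretic computation using De~Morgan's laws yields the identities
\begin{equation*}
U_i \cup U_j = \bbP^{2D}\backslash (Q_{i,\epsilon}\cap Q_{j,\epsilon}) \quad \text{and} \quad U_i \cap U_j = \bbP^{2D}\backslash (Q_{i,\epsilon}\cup Q_{j,\epsilon}).
\end{equation*}

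Applying \eqref{eq:triangle-MV} to the Zariski open cover $X:=U_i \cup U_j$ then produces the distinguished triangle
\begin{equation*}
M^c(\bbP^{2D}\backslash (Q_{i,\epsilon}\cap Q_{j,\epsilon}))_\bbQ \too M^c(U_i)_\bbQ \oplus M^c(U_j)_\bbQ \too M^c(\bbP^{2D}\backslash (Q_{i,\epsilon}\cup Q_{j,\epsilon}))_\bbQ \too [1].
\end{equation*}
By Lemma~\ref{lem:new}, the middle term $M^c(U_i)_\bbQ \oplus M^c(U_j)_\bbQ$ is mixed-Tate. Since the full subcategory of mixed-Tate motives is a thick triangulated subcategory of $\mathrm{DM}_{\mathrm{gm}}(F)_\bbQ$ (in particular, closed under cones and shifts), the ``two-out-of-three'' principle for distinguished triangles implies that the outer term $M^c(\bbP^{2D}\backslash (Q_{i,\epsilon}\cap Q_{j,\epsilon}))_\bbQ$ is mixed-Tate if and only if the other outer term $M^c(\bbP^{2D}\backslash (Q_{i,\epsilon}\cup Q_{j,\epsilon}))_\bbQ$ is mixed-Tate, as desired.

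I expect this to be essentially the entire proof; there is no serious obstacle, since the result is a formal consequence of Mayer--Vietoris together with the mixed-Tate character of the complements of single (smooth odd-dimensional) quadrics established in Lemma~\ref{lem:new}.
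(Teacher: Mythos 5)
Your proof is correct and matches the paper's argument essentially verbatim: both apply the Mayer--Vietoris triangle \eqref{eq:triangle-MV} to the cover of $\bbP^{2D}\backslash (Q_{i,\epsilon}\cap Q_{j,\epsilon})$ by $\bbP^{2D}\backslash Q_{i,\epsilon}$ and $\bbP^{2D}\backslash Q_{j,\epsilon}$, and then invoke Lemma~\ref{lem:new} together with the two-out-of-three property of the thick subcategory of mixed-Tate motives.
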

\begin{proof}
Thanks to Lemma \ref{lem:new}, the proof follows from the general Mayer-Vietoris triangle \eqref{eq:triangle-MV} (with $X:=\bbP^{2D}\backslash (Q_{i, \epsilon} \cap Q_{j, \epsilon})$, $U:=\bbP^{2D}\backslash Q_{i, \epsilon}$ and $V:=\bbP^{2D}\backslash Q_{j, \epsilon}$); note that under these choices we have $U\cap V = \bbP^{2D}\backslash (Q_{i, \epsilon} \cup Q_{j, \epsilon})$.
\end{proof}

Thanks to Proposition \ref{prop:2}, the intersection $Q_{i, \epsilon} \cap Q_{j, \epsilon}$, with $i \neq j \in \{1,2,3\}$, is a complete intersection of two odd-dimensional quadrics. Therefore, as proved in \cite[Cor.~2.1]{BerTab}, the Chow motive $\mathfrak{h}(Q_{i, \epsilon} \cap Q_{j, \epsilon})_\bbQ$ admits the motivic decomposition:
\begin{equation}\label{eq:decomp-intersection}
{\bf 1}_\bbQ \oplus \bbL \oplus \bbL^{\otimes 2} \oplus \cdots \oplus \bbL^{\otimes (D-2)} \oplus (\bbL^{\otimes (D-1)})^{\oplus (2D+2)} \oplus \bbL^{\otimes D} \oplus \cdots \oplus \bbL^{\otimes (2D-2)}\,.
\end{equation}
This implies that the motive $M^c(Q_{i, \epsilon} \cap Q_{j, \epsilon})_\bbQ\simeq M(Q_{i, \epsilon} \cap Q_{j, \epsilon})_\bbQ$ is mixed-Tate; see Remark \ref{rk:key}. Using the fact that the motive $M^c(\bbP^{2D})_\bbQ\simeq M(\bbP^{2D})_\bbQ$ is mixed-Tate, we hence conclude from the general Gysin triangle \eqref{eq:triangle-Gysin} (with $X:=\bbP^{2D}$ and $Z:=Q_{i, \epsilon} \cap Q_{j, \epsilon}$) that the motive $M^c(\bbP^{2D}\backslash (Q_{i, \epsilon} \cap Q_{j, \epsilon}))_\bbQ$ is also mixed-Tate. Consequently, thanks to Proposition \ref{prop:1}, we obtain the (unconditional) equivalence:
\begin{equation*}
M^c(\bbP^{2D}\backslash Q_{(\Gamma, m)})_\bbQ\,\,\text{mixed}\text{-}\text{Tate} \Leftrightarrow M^c(\bbP^{2D}\backslash (Q_{1, \epsilon} \cap Q_{2, \epsilon} \cap Q_{3, \epsilon}))_\bbQ\,\,\text{mixed}\text{-}\text{Tate}\,.
\end{equation*}
Using the fact that the motive $M^c(\bbP^{2D})_\bbQ\simeq M(\bbP^{2D})_\bbQ$ is mixed-Tate, we conclude from the Gysin triangle \eqref{eq:triangle-Gysin} (with $X:=\bbP^{2D}$ and $Z:=Q_{1, \epsilon} \cap Q_{2, \epsilon} \cap Q_{3, \epsilon}$)~that 
\begin{equation*}
M^c(\bbP^{2D}\backslash (Q_{1, \epsilon} \cap Q_{2, \epsilon} \cap Q_{3, \epsilon}))_\bbQ\,\,\text{mixed}\text{-}\text{Tate} \Leftrightarrow M^c(Q_{1, \epsilon} \cap Q_{2, \epsilon} \cap Q_{3, \epsilon})_\bbQ\,\,\text{mixed}\text{-}\text{Tate}\,.
\end{equation*}
The combination of equivalence \eqref{equivalence0} with the preceding two equivalences leads then to the following equivalence:
\begin{equation}\label{eq:equivalence5}
M^Q_{(\Gamma, m)}\,\,\text{mixed}\text{-}\text{Tate} \Leftrightarrow M^c(Q_{1, \epsilon} \cap Q_{2, \epsilon} \cap Q_{3, \epsilon})_\bbQ\,\,\text{mixed}\text{-}\text{Tate}\,.
\end{equation}
Thanks to Proposition \ref{prop:2}, the intersection $Q_{1, \epsilon} \cap Q_{2, \epsilon} \cap Q_{3, \epsilon}$ is a complete intersection of three odd-dimensional quadrics. Therefore, as proved in \cite[Cor.~2.1]{BerTab}, the Chow motive $\mathfrak{h}(Q_{1, \epsilon} \cap Q_{2, \epsilon} \cap Q_{3, \epsilon})_\bbQ$ admits the following motivic decomposition
\begin{equation}\label{eq:decomp-last}
{\bf 1}_\bbQ \oplus \bbL \oplus \bbL^{\otimes 2} \oplus \cdots \oplus \bbL^{\otimes (2D-3)} \oplus (\mathfrak{h}^1(J_a^{D-2}(Q_{1, \epsilon}\cap Q_{2, \epsilon} \cap Q_{3, \epsilon}))_\bbQ \otimes \bbL^{\otimes (D-1)})\,,
\end{equation}
where $J_a^{D-2}(Q_{1, \epsilon}\cap Q_{2, \epsilon} \cap Q_{3, \epsilon})$ stands for the $(D-2)^{\mathrm{th}}$ intermediate algebraic Jacobian of $Q_{1, \epsilon}\cap Q_{2, \epsilon} \cap Q_{3, \epsilon}$. Moreover, as proved by Beauville in \cite[Thm.~6.3]{Beau}, the abelian variety $J^{D-2}_a(Q_{1, \epsilon}\cap Q_{2, \epsilon} \cap Q_{3, \epsilon})$ is isomorphic, as a principally polarized abelian variety, to the Prym variety $\mathrm{Prym}(\widetilde{C}/C)$ mentioned in \S\ref{sec:intro}. 

We now claim that the motive $M^c(Q_{1 \epsilon}\cap Q_{2, \epsilon} \cap Q_{3, \epsilon})_\bbQ \simeq M(Q_{1, \epsilon} \cap Q_{2, \epsilon} \cap Q_{3, \epsilon})_\bbQ$ is {\em not} mixed-Tate. Recall from Totaro \cite[Cor.~7.3]{Tot} that $M(Q_{1, \epsilon}\cap Q_{2, \epsilon} \cap Q_{3, \epsilon})_\bbQ$ is mixed-Tate if and only if the Chow motive $\mathfrak{h}(Q_{1, \epsilon}\cap Q_{2, \epsilon} \cap Q_{3, \epsilon})_\bbQ$ is a direct summand of a finite direct sum of Lefschetz motives. Given a Weil cohomology theory $H^\ast$ (see \cite[\S3.3-3.4]{Andre}), the odd cohomology of a direct summand of a finite direct sum of Lefschetz motives is always zero. In contrast, since $\mathrm{Prym}(\widetilde{C}/C)\not\simeq 0$ is an abelian variety, it follows from the above motivic decomposition \eqref{eq:decomp-last} that  
$$H^1(\mathfrak{h}(Q_{1, \epsilon} \cap Q_{2, \epsilon} \cap Q_{3, \epsilon})_\bbQ)=H^1(\mathfrak{h}^1(\mathrm{Prym}(\widetilde{C}/C))_\bbQ)=H^1(\mathrm{Prym}(\widetilde{C}/C))\neq 0\,.$$
This implies the preceding claim. Consequently, the proof of item (i) follows now automatically from the above equivalence \eqref{eq:equivalence5}. 

\subsection*{Proof of item (ii)}
Recall from above that we have the Mayer-Vietoris triangles
$$ M^c(U\cup V)_\bbQ \too M^c(U)_\bbQ \oplus M^c(V)_\bbQ \too M^c(\bbP\backslash Q_{(\Gamma, m)})_\bbQ \too M^c(U \cup V)_\bbQ[1] $$
$$ M^c(\bbP\backslash Q_{123})_\bbQ  \too M^c(U_{13})_\bbQ \oplus M^c(U_{23})_\bbQ \too M^c(U \cup V)_\bbQ  \too M^c(\bbP\backslash Q_{123})_\bbQ[1]\,,$$
where $\bbP:=\bbP^{2D}$, $U:=\bbP^{2D} \backslash(Q_{1, \epsilon} \cap Q_{2, \epsilon})$, $V:=\bbP^{2D}\backslash Q_{3, \epsilon}$, $U_{13}:= \bbP^{2D}\backslash(Q_{1, \epsilon} \cap Q_{3, \epsilon})$, $U_{23}:= \bbP^{2D}\backslash (Q_{2, \epsilon} \cap Q_{3, \epsilon})$, and $Q_{123}:= Q_{1, \epsilon} \cap Q_{2, \epsilon} \cap Q_{3, \epsilon}$.  

Recall also from above that we have the following Gysin triangles:
$$
M^c(Q_{1, \epsilon} \cup Q_{2, \epsilon})_\bbQ   \too M^c(\bbP)_\bbQ \too M^c(U)_\bbQ \too M^c(Q_{1, \epsilon} \cup Q_{2, \epsilon})_\bbQ[1]
$$
$$
M^c(Q_{3, \epsilon})_\bbQ  \too M^c(\bbP)_\bbQ \too M^c(V)_\bbQ  \too M^c(Q_{3, \epsilon})_\bbQ[1]
$$
$$
M^c(Q_{1, \epsilon} \cap Q_{3, \epsilon})_\bbQ  \too M^c(\bbP)_\bbQ \too  M^c(U_{13})_\bbQ \too M^c(Q_{1, \epsilon} \cap Q_{3, \epsilon})_\bbQ[1]
$$
$$
M^c(Q_{2, \epsilon} \cap Q_{3, \epsilon})_\bbQ  \to M^c(\bbP)_\bbQ \too M^c(U_{23})_\bbQ \too M^c(Q_{2, \epsilon} \cap Q_{3, \epsilon})_\bbQ[1]
$$
$$ M^c(Q_{123})_\bbQ \too M^c(\bbP)_\bbQ \too M^c(\bbP\backslash Q_{123})_\bbQ \too M^c(Q_{123})_\bbQ[1]\,.$$
In what follows, we make use of the direct sum of the $1^{\mathrm{st}}$ and $2^{\mathrm{nd}}$ Gysin triangles as well as of the direct sum of the $3^{\mathrm{rd}}$ and $4^{\mathrm{th}}$ Gysin triangles. Since $M^c(\bbP)_\bbQ \simeq M(\bbP)_\bbQ \simeq \bigoplus_{i=0}^{2D} \bbQ(i)[2i]$, we can then conclude from the above motivic computations \eqref{eq:decomp-quadric}, \eqref{eq:decomp-intersection} and \eqref{eq:decomp-last} (see Remark \ref{rk:key}) and from the isomorphism $J_a^{D-2}(Q_{123})=\mathrm{Prym}(\widetilde{C}/C)$ that the motive $M^c(\bbP\backslash Q_{(\Gamma, m)})_\bbQ$ belongs to the smallest subcategory of $\mathrm{DM}_{\mathrm{gm}}(F)_\bbQ$ which can be obtained from the set of motives $\{M(\mathrm{Prym}(\widetilde{C}/C))_\bbQ, \bbQ(0), \bbQ(1)\}$ by taking direct sums, shifts, summands, tensor products, and at most $5$ cones. Now, recall from \cite[Thm.~4.3.7]{Voev} that since $\bbP^{2D}\backslash Q_{(\Gamma, m)}$ is smooth and $2D$-dimensional, we have a canonical isomorphism between the dual $M(\bbP^{2D}\backslash Q_{(\Gamma, m)})_\bbQ^\vee$ of $M^Q_{(\Gamma, m)}$ and $M^c(\bbP^{2D}\backslash Q_{(\Gamma, m)})_\bbQ(-2D)[-4D]$. Using the fact that the duality functor $(-)^\vee$ preserves direct sums, shifts, summands, tensor products, and cones, we hence conclude that the Feynman quadrics-motive $M^Q_{(\Gamma, m)}$ belongs to the smallest subcategory of $\mathrm{DM}_{\mathrm{gm}}(F)_\bbQ$ which can be obtained from the set of motives $ \{M(\mathrm{Prym}(\widetilde{C}/C))^\vee_\bbQ, \bbQ(-1)^\vee, \bbQ(1)^\vee\}$ by taking direct sums, shifts, summands, tensor products, and at most $5$ cones. The proof follows now from the isomorphisms $\bbQ(i)^\vee \simeq \bbQ(-i)$ and 
$$ M(\mathrm{Prym}(\widetilde{C}/C))_\bbQ^\vee \simeq M(\mathrm{Prym}(\widetilde{C}/C))_\bbQ (-d)[-2d]\,,$$ 
where $d$ stands for the dimension of the Prym variety.


\begin{thebibliography}{00}

\bibitem{Andre} Y.~Andr\'e, {\em Une introduction aux motifs (motifs purs, motifs mixtes, p\'eriodes)}. Panoramas et Synth\`eses, vol. {\bf 17}, Soci\'et\'e Math\'ematique de France, Paris, 2004.

\bibitem{AluMa} P.~Aluffi and M.~Marcolli, {\em Feynman motives of banana graphs}. Commun. Number Theory Phys.  {\bf 3} (2009) no.~1, 1--57.

\bibitem{AluMa2} \bysame, {\em Parametric Feynman integrals and determinant hypersurfaces}. Adv. Theor. Math. Phys. {\bf 14} (2010), no.~3, 911--963.

\bibitem{AluMa3} \bysame, {\em Graph hypersurfaces and a dichotomy in the Grothendieck 
ring}. Lett. Math. Phys. {\bf 95} (2011), no.~3, 223--232. 

\bibitem{AluMa4} \bysame, {\em Feynman motives and deletion-contraction relations}. Topology of algebraic varieties and singularities, 21--64, Contemp. Math., {\bf 538}, Amer. Math. Soc., 
2011.

\bibitem{Beau} A.~Beauville, {\em Vari\'et\'es de Prym et Jacobiennes interm\'ediaires}. Ann. Sci. de l'ENS {\bf 10} (1977) 309--391.

\bibitem{BeBro} P.~Belkale and P.~Brosnan, {\em Matroids, motives, and a conjecture of Kontsevich}. Duke Math. J. {\bf 116} (2003), 147--188.

\bibitem{BeBro2} P.~Belkale and P.~Brosnan, {\em Periods and Igusa local zeta functions}. Int. Math. Res. Not. {\bf 49} (2003), 2655--2670.

\bibitem{BerTab}  M.~Bernardara and G.~Tabuada, {\em Chow groups of intersections of quadrics via homological projective duality and (Jacobians of) non-commutative motives}. Izv. Math. {\bf 80} (2016) no.~3, 463--480.

\bibitem{Bern} I.~N.~Bernstein, {\em Analytic continuation of generalized functions with respect to a parameter}. Functional Anal. Appl. {\bf 6} (1972), 273--285 (1973)

\bibitem{BEK} S.~Bloch, H.~Esnault and D.~Kreimer, {\em On motives associated to graph polynomials}. Comm. Math. Phys. {\bf 267} (2006), no.~1, 181--225.

\bibitem{BloHov2}  S.~Bloch, M.~Kerr and P.~Vanhove, {\em A Feynman integral via higher normal functions}. Compos. Math. {\bf 151} (2015) no.~12, 2329--2375. 

\bibitem{BloKr} S.~Bloch and D.~Kreimer, {\em Mixed Hodge structures and renormalization in physics}. Comm. Number Theory Phys. {\bf 2} (2008), no.~4, 637--718. 

\bibitem{BloKr2} \bysame, {\em Feynman amplitudes and Landau singularities for one-loop graphs}. Comm. Number Theory Phys. {\bf 4} (2010), no.~4, 709--753.

\bibitem{BloHov1}  S.~Bloch and P.~Vanhove, {\em The elliptic dilogarithm for the sunset graph}. J. Number Theory {\bf 148} (2015), 328--364.

\bibitem{BoBr} C.~Bogner and F.~Brown, {\em Feynman integrals and iterated integrals on moduli spaces of curves of genus zero}. Comm. Number Theory Phys. {\bf 9} (2015), no.~1, 189--238.

\bibitem{Bro}  P.~Brosnan, {\em On motivic decompositions arising from the method of Bialynicki-Birula}. 
Invent. Math. {\bf 161} (2005), no.~1, 91--111. 

\bibitem{BrSch} F.~Brown and O.~Schnetz, {\em A K3 in $\phi^4$}. Duke Math. J. {\bf 161} (2012), no.~10, 1817--1862.

\bibitem{BrSch2} \bysame, {\em Modular forms in quantum field theory}, Comm. Number Theory Phys. {\bf 7} (2013), no.~2, 293--325.

\bibitem{BrSchY} F.~Brown, O.~Schnetz and K.~Yeats, {\em Properties of $c_2$ 
invariants of Feynman graphs}. Adv. Theor. Math. Phys. {\bf 18} (2014), no.~2, 323--362. 

\bibitem{CeyMar} O.~Ceyhan and M.~Marcolli, {\em Feynman integrals and motives of configuration spaces}. Comm. Math. Phys. 313 (2012), no. 1, 35--70. 

\bibitem{CeyMar2} \bysame, {\em  Algebraic renormalization and Feynman integrals in configuration spaces}. Adv. Theor. Math. Phys. {\bf 18} (2014), no.~2, 469--511. 

\bibitem{CoKr} A.~Connes and D.~Kreimer, {\em Renormalization in quantum field theory and the Riemann-Hilbert problem. I. The Hopf algebra structure of graphs and the main theorem}. Comm. Math. Phys. {\bf 210} (2000) no.~1, 249--273.

\bibitem{CoMa} A.~Connes and M.~Marcolli, {\em Noncommutative Geometry, Quantum Fields and Motives}.
Colloquium Publications, Vol.~{\bf 55}, American Mathematical Society, 2008.

\bibitem{Doryn} D.~Doryn, {\em On one example and one counterexample in counting rational points on graph hypersurfaces}. Lett. Math. Phys. {\bf 97} (2011), no.~3, 303--315.


\bibitem{Harris} J.~Harris, {\em Algebraic geometry. 
A first course}. Graduate Texts in Mathematics, {\bf 133}. Springer-Verlag, New York, 1992

\bibitem{ItZu} C.~Itzykson and J.B.~Zuber, {\em Quantum Field Theory}. McGraw-Hill, 1980.

\bibitem{Mar} M.~Marcolli, {\em Feynman motives}. World Scientific Publishing, NJ, 2010.

\bibitem{Mar2}  \bysame, {\em Feynman integrals and motives}. 
European Congress of Mathematics, 293--332, Eur. Math. Soc., Z\"urich, 2010.

\bibitem{MarNi} M.~Marcolli and X.~Ni, {\em Rota-Baxter algebras, singular hypersurfaces, and renormalization on Kausz compactifications}. J. Singul. {\bf 15} (2016), 80--117. 


\bibitem{Maz} C.~Mazza, V.~Voevodsky and C.~Weibel, {\em Lecture notes on motivic cohomology}. 
Clay Mathematics Monographs, {\bf 2}. American Mathematical Society, 2006.


\bibitem{Sch1} O.~Schnetz, {\em Quantum field theory over ${\mathbb F}_q$}. Electron. J. Combin. {\bf 18} (2011), no.~1, paper 102.

\bibitem{Sch2} \bysame, {\em Quantum periods: a census of $\phi^4$-transcendentals}. Comm. Number Theory Phys. {\bf 4} (2010), no.~1, 1--47. 

\bibitem{Sta} R.~Stanley, {\em Spanning trees and a conjecture of Kontsevich}. Ann. Comb. {\bf 2} (1998), no.~4, 351--363. 

\bibitem{Ste} J.~Stembridge, {\em 
Counting points on varieties over finite fields related to a conjecture of Kontsevich}. Ann. Comb. {\bf 2} (1998), no. 4, 365--385.


\bibitem{Tot} B.~Totaro, {\em The motive of a classifying space}. Geom. Topol. {\bf 20} (2016), no.~4, 2079--2133. 

\bibitem{Vanhove} P.~Vanhove, {\em The physics and the mixed Hodge structure of Feynman integrals}. Proc. Symp. Pure Math. {\bf 88} (2014) 161--194.



\bibitem{Voev} V.~Voevodsky, {\em Triangulated categories of motives over a field}. Cycles, transfers, and motivic homology theories, 188Ð238,  Ann. of Math. Stud., {\bf 143}, Princeton Univ. Press, NJ, 2000. 


\end{thebibliography}
\end{document}

\end{proof}